\newtheorem{thm}{Theorem}
\newtheorem{theorem}{Theorem}[section]
\newtheorem{cor}[theorem]{Corollary}
\newtheorem{lemma}[theorem]{Lemma}
\newtheorem{defn}[theorem]{Definition}
\def\irr#1{{\rm Irr}(#1)}
\def\irrr#1#2 {\irr {#1 \mid #2}}
\title{Camina Triples}
\author {Nabil M.\ Mlaiki
   \\ {\it Department of Mathematical Sciences, Kent State University}
   \\ {\it Kent, Ohio 44242}
   \\ E-mail: nmlaiki2012@gmail.com
      }
\date{}
\begin{document}

\maketitle{}
\begin{abstract}
In this paper, we study Camina triples. Camina triples are a
generalization of Camina pairs. Camina pairs were first introduced
in 1978 by A.R. Camina in \cite{camina1}. Camina's work in
\cite{camina1} was inspired by the study of Frobenius groups. We
show that if $(G,N,M)$ is a Camina triple, then either $G/N$ is a
$p$-group, or $M$ is abelian, or $M$ has a non-trivial nilpotent or
Frobenius quotient.
\end{abstract}

\section{Introduction}

In this paper, we study Camina triples. Camina triples are a
generalization of Camina pairs. Camina pairs were first introduced
in 1978 by A.R. Camina in \cite{camina1}. Camina's work in
\cite{camina1} was inspired by the study of Frobenius groups. In
this paper we the notation as in \cite{Isaacs1}.

Throughout this paper, we say that $(G,N)$ is a Camina pair when $N$
is a normal subgroup of a group $G,$ and for all $x\in G\setminus
N,$ $x$ is conjugate to all of $xN.$ Chillag and Macdonald proved in
\cite{chillag2} two equivalent conditions of a pair $(G,N)$ to be a
Camina pair. They showed that if $(G,N)$ is a Camina pair, then for
every $x\in G\setminus N$ we have
$|{\bf{C}}_{G}(x)|=|{\bf{C}}_{G/N}(xN)|.$ Also, they proved that if
$(G,N)$ is a Camina pair, then for all $x\in G\setminus N$ and $z\in
N,$ there exists an element $y\in G$ so that $ [x,y]=z.$ In
\cite{MacDonald1}, MacDonald showed that if $(G,N)$ is a Camina pair
where $G$ is a $p$-group, then $N$ is a term in both the lower and
the upper central series. As was proved in \cite{chillag2}, if $\chi
\in \rm{Irr}(G)$ where $N\nleq ker(\chi),$ then $\chi$ vanishes on
$G\setminus N.$ Camina proved in \cite{camina1} that if $(G,N)$ is a
Camina pair, then either $N$ is a $p$-group or $G/N$ is a $p$-group
for some prime $p,$ or $G$ is Frobenius group with kernel $N.$ In
our first theorem, we prove some facts about the subgroup $M$ when
$(G,N,M)$ is a Camina triple.

First, define Irr$(G\mid M)=\{ \chi\in$ Irr$( G) \mid M \nleq
ker(\chi)\}.$
\begin{defn}
Let $1 < M \leq N $ be two nontrivial
normal subgroups of a finite group $G$. We say that $(G,N,M)$ is a
\emph{Camina Triple} if for every $g \in G \setminus N$, $g$ is a
conjugate to all of $gM$.
\end{defn}
 Notice that Camina pairs are special cases
of Camina triples when $ M = N$.

\begin{thm}
If $(G,N, M)$ is a Camina triple then the following are true:
\begin{enumerate}
\item $M$ is solvable.
\item $M$ has a normal $\pi$-complement $Q$ with $M/Q$ is nilpotent,
where $\pi$ is the set of primes that divide $\mid G: N\mid.$
\item If $x\in M,$ then there exists $\chi \in \rm{Irr}(G\mid M)$
such that $\chi(x)\neq 0.$
\item If $x\in G\setminus N,$ then $\chi(x)=0$ for all $\chi \in \rm{Irr}(G\mid M).$

\end{enumerate}
\end{thm}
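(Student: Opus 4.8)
The plan is to extract from the definition two standard reformulations of the Camina condition, dispose of the character-theoretic items (3) and (4) by short counting arguments, and then read off (1) and (2) from an arithmetic consequence of the condition. \emph{Reformulations and reductions:} for $g\in G\setminus N$, requiring that $g$ be conjugate to every element of $gM$ says precisely that the coset $gM$ is a single conjugacy class of $G$. Counting how many conjugates of $g$ lie in a fixed coset of $M$, exactly as Chillag and Macdonald do for Camina pairs, this is equivalent to $|\mathbf{C}_G(g)|=|\mathbf{C}_{G/M}(gM)|$, equivalently to $g^G$ being the full preimage of $(gM)^{G/M}$ under $G\to G/M$, equivalently to: for each $m\in M$ there is $y\in G$ with $[g,y]=m$. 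I will also use two hereditary facts, both immediate: $(G/L,N/L,M/L)$ is a Camina triple whenever $L\trianglelefteq G$ and $L<M$, and $(G,N,M_0)$ is a Camina triple whenever $1<M_0\le M$ and $M_0\trianglelefteq G$. Finally, since $\pi$ is the set of primes dividing $|G:N|$, the group $G/N$ is a $\pi$-group; hence every $\pi'$-element of $G$ lies in $N$, while every nontrivial element of $G/N$ is the image of some $\pi$-element of $G$ lying outside $N$.

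\emph{The centralizer lemma.} If $g\in G\setminus N$ is a $p$-element and $m\in\mathbf{C}_M(g)$ has prime order $q\ne p$, then $g$ and $m$ commute with coprime orders, so $o(gm)=o(g)q$; but $gm\in gM$ is conjugate to $g$, so $o(gm)=o(g)$, a contradiction. Thus $\mathbf{C}_M(g)$ is a $p$-group for every $p$-element $g\in G\setminus N$, and more generally $\mathbf{C}_M(g)$ is a $\pi$-group for every $\pi$-element $g\in G\setminus N$. Dually, if $x\in M$ is a nontrivial $\pi'$-element then no $\pi$-element of $G\setminus N$ centralizes $x$, and since $G/N$ is a $\pi$-group this forces $\mathbf{C}_G(x)\le N$. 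For (4), let $\chi\in\mathrm{Irr}(G\mid M)$ and $g\in G\setminus N$: since $M\trianglelefteq G$ and $M\not\le\ker\chi$, Clifford's theorem shows $\chi_M$ has no trivial constituent, so $\sum_{m\in M}\mathfrak{X}(m)=0$ for a representation $\mathfrak{X}$ affording $\chi$, whence $\sum_{m\in M}\chi(gm)=0$; as $\chi(gm)=\chi(g)$ for all $m\in M$, we get $|M|\chi(g)=0$, so $\chi(g)=0$. For (3): the sets $\mathrm{Irr}(G\mid M)$ and $\mathrm{Irr}(G/M)$ partition $\mathrm{Irr}(G)$; for $1\ne x\in M$, column orthogonality in $G$ gives $\sum_{\chi\in\mathrm{Irr}(G)}\chi(1)\chi(x)=0$, while $\chi(x)=\chi(1)$ for $\chi\in\mathrm{Irr}(G/M)$ gives $\sum_{\chi\in\mathrm{Irr}(G/M)}\chi(1)\chi(x)=|G:M|$; subtracting, $\sum_{\chi\in\mathrm{Irr}(G\mid M)}\chi(1)\chi(x)=-|G:M|\ne0$, so some $\chi\in\mathrm{Irr}(G\mid M)$ has $\chi(x)\ne0$; the case $x=1$ is trivial since $\mathrm{Irr}(G\mid M)\ne\varnothing$.

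\emph{Proof of (1).} Suppose $M$ is not solvable. Replacing $M$ by a minimal normal subgroup of $G$ contained in it (second hereditary fact) and then factoring out suitable normal subgroups of $G$ (first hereditary fact), I reduce to the case that $M$ is a non-abelian minimal normal subgroup of $G$, say $M\cong S_1\times\cdots\times S_k$ with the $S_i$ isomorphic non-abelian simple groups transitively permuted by $G$. Here one must derive a contradiction, and this is the heart of the matter: using $|\mathbf{C}_G(g)|=|\mathbf{C}_{G/M}(gM)|$, that $\mathbf{C}_M(g)$ is a $\pi$-group for $\pi$-elements $g\in G\setminus N$, and that $\mathbf{C}_G(x)\le N$ for nontrivial $\pi'$-elements $x\in M$, one shows the conjugation action of $G$ on $M$ is too rigid to exist, invoking properties of finite simple groups — even order and order divisible by at least three primes (Feit--Thompson and Burnside's $p^aq^b$ theorem), and solvability of $\mathrm{Out}(S)$ — to rule out the surviving configurations. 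Producing this contradiction is the step I expect to be the main obstacle.

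\emph{Proof of (2).} Since $M/Q$ is required to be nilpotent, the conclusion is equivalent to the statement that $M$ has a normal $p$-complement for every $p\in\pi$: given such complements $R_p$, the intersection $Q=\bigcap_{p\in\pi}R_p$ is a normal Hall $\pi'$-subgroup and $M/Q$ embeds in $\prod_{p\in\pi}M/R_p$, a nilpotent $\pi$-group, while conversely a nilpotent $M/Q$ supplies the $R_p$. So fix $p\in\pi$ and choose $P_G\in\mathrm{Syl}_p(G)$; then $P_G\not\le N$ because $p\mid|G:N|$, the subgroup $P:=P_G\cap M$ is a Sylow $p$-subgroup of $M$ normal in $P_G$, and any $g\in P_G\setminus N$ is a $p$-element with $\mathbf{C}_M(g)$ a $p$-group by the centralizer lemma. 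I would then verify a normal $p$-complement criterion in $M$ (for instance, that $\mathbf{N}_M(U)/\mathbf{C}_M(U)$ is a $p$-group for every $p$-subgroup $U\le M$): a $p'$-element of $M$ acting nontrivially on such a $U$ could be multiplied into a $p$-element of $P_G\setminus N$ to produce an element of a coset $gM$ whose order differs from $o(g)$, contradicting the Camina condition. Together with (1) this gives all four assertions.
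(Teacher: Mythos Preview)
Your arguments for (3) and (4) are correct and self-contained; the paper obtains (4) by the centralizer count $|\mathbf{C}_G(g)|=|\mathbf{C}_{G/M}(gM)|$ together with column orthogonality rather than by your Clifford-plus-averaging argument, but either works, and the paper does not spell out (3) at all, so your orthogonality computation there is a clean addition.

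The genuine gap is in (1) and (2). For (1) you reduce correctly to the case where $M$ is a nonabelian minimal normal subgroup of $G$, and then stop, explicitly flagging the contradiction as ``the main obstacle''; the tools you list (Feit--Thompson, Burnside's $p^aq^b$, solvability of $\mathrm{Out}(S)$) are not visibly sufficient, and the last one already appeals to CFSG. For (2) your Frobenius-criterion sketch does not work as written: if $y\in M$ is a $p'$-element normalising but not centralising a $p$-subgroup $U\le M$, and $g\in P_G\setminus N$, there is no reason to expect $o(gy)\ne o(g)$ unless $y$ commutes with $g$; indeed the Camina condition \emph{forces} $o(gy)=o(g)$, so no contradiction arises. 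The paper takes an entirely different, much shorter route to both (1) and (2) simultaneously. From (4), every $\chi\in\mathrm{Irr}(G\mid M')\subseteq\mathrm{Irr}(G\mid M)$ vanishes on $G\setminus N$; a short computation (Isaacs, p.~200: $\langle\chi_N,\chi_N\rangle=|G:N|$, whence $|G:N|\mid\chi(1)^2$) then shows that every prime $p$ dividing $|G:N|$ divides every degree in $\mathrm{cd}(G\mid M')$. At this point a single citation of Berkovich's theorem (Theorem~D of Isaacs--Knutson, \emph{J.\ Algebra} \textbf{199} (1998), 302--326) yields that $M$ is solvable and has a normal $p$-complement for each $p\in\pi$. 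Intersecting these complements gives the normal $\pi$-complement $Q$, and $M/Q$ is nilpotent exactly as you noted. That Berkovich/Isaacs--Knutson input is the missing idea in your proposal.
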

The following collorary is an immediate consequence of Theorem 1.
\begin{cor}
If $(G,N)$ is a Camina pair and $x\in N,$ then there exist $\chi \in
\rm{ Irr}(G\mid N)$ such that $\chi(x) \neq 0.$
\end{cor}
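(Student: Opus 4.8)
The plan is to read the corollary off from part~(3) of Theorem~1. A Camina pair $(G,N)$ is, by the remark immediately following the definition, exactly a Camina triple $(G,N,M)$ with $M=N$, so Theorem~1 applies to $(G,N,N)$. Part~(3) of that theorem, with $M$ replaced by $N$, says precisely that for every $x\in N$ there is some $\chi\in\mathrm{Irr}(G\mid N)$ with $\chi(x)\neq 0$, which is the assertion of the corollary. So the only thing to carry out is the bookkeeping that identifies the hypothesis of the corollary with the $M=N$ instance of Theorem~1; no new argument is needed.

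For completeness I would also record a short self-contained derivation that does not invoke Theorem~1, since it exhibits why the statement is true. Write the regular character of $G$ as $\rho_G=\sum_{\chi\in\mathrm{Irr}(G)}\chi(1)\chi$ and split the sum according to whether $N\leq\ker(\chi)$: the characters with $N\leq\ker(\chi)$ are the inflations of $\mathrm{Irr}(G/N)$, so their contribution, evaluated at a group element $g$, is $\rho_{G/N}(gN)$, while the remaining characters are exactly those in $\mathrm{Irr}(G\mid N)$. Now evaluate at $x\in N$. Since $xN$ is the identity coset of $G/N$, the first block equals $\rho_{G/N}(xN)=|G:N|$. If $x\neq 1$ then $\rho_G(x)=0$, whence $\sum_{\chi\in\mathrm{Irr}(G\mid N)}\chi(1)\chi(x)=-|G:N|\neq 0$, so $\chi(x)\neq 0$ for at least one $\chi\in\mathrm{Irr}(G\mid N)$; if $x=1$ any $\chi\in\mathrm{Irr}(G\mid N)$ works, and this set is nonempty because $N>1$.

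There is essentially no genuine obstacle in either route: the substance is contained in Theorem~1(3), and the only point that needs a line of justification is that a Camina pair falls under the hypotheses of Theorem~1 via $M=N$; in the alternative argument the one thing to get right is the decomposition of $\rho_G$ into the part trivial on $N$ and the part in $\mathrm{Irr}(G\mid N)$. If one wishes to stress the conceptual payoff, this corollary should be read together with part~(4) of the theorem (the vanishing statement): combining the two shows that the characters in $\mathrm{Irr}(G\mid N)$ vanish on exactly the set $G\setminus N$.
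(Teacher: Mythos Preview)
Your primary argument is exactly the paper's: the corollary is declared an ``immediate consequence of Theorem~1,'' and your application of Theorem~1(3) with $M=N$ (via the observation that a Camina pair is the $M=N$ case of a Camina triple) is precisely that deduction.

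Your self-contained alternative via the regular character is correct and in fact reveals something the paper does not make explicit: the conclusion holds for \emph{any} nontrivial normal subgroup $N$, with no Camina hypothesis needed. The paper's route packages this inside Theorem~1(3) for Camina triples; your direct argument isolates the elementary character-theoretic fact underlying it.
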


Lewis showed in \cite{Lewis1} that if $V(G)<G,$ then for every $x\in
G\setminus V(G)$ we have $cl(x)=xG'.$ Thus, if $V(G)<G,$ the triple
$(G,V(G), G')$ is a Camina triple. So, by Theorem 1, we deduce that
$G'$ is solvable. Therefore, $G$ is solvable.

Our second theorem, which we consider the main result of this paper.

\begin{thm}
If $(G,N,M)$ is a Camina triple, then at least one of the following
hold:
\begin{enumerate}
\item $G/N$ is a $p$-group.
\item $M$ has a non-trivial nilpotent quotient.
\item $M$ has a non-trivial Frobenius quotient with an Frobenius complement that is an elementary abelain $p$-group.
\item $M$ is abelian.
\end{enumerate}
\end{thm}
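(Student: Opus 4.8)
The plan is to strip away the "easy" alternatives using Theorem 1 and then, in the single remaining configuration, construct a Frobenius quotient of $M$ by hand out of the character information supplied by Theorem 1. If $M$ is abelian we are in case (4); assume not. Put $\pi = \pi(|G:N|)$. If $G/N$ is a $p$-group we are in case (1); assume not. Let $Q$ be the normal $\pi$-complement of $M$ provided by Theorem 1(2), so that $M/Q$ is nilpotent; if $Q < M$ then $M/Q$ is a non-trivial nilpotent quotient and (2) holds, so we may assume $Q = M$, i.e. $M$ is a $\pi'$-group and $\gcd(|M|,|G:N|)=1$. If $M$ is nilpotent then $M$ is a non-trivial nilpotent quotient of itself and (2) holds, so we may also assume $M$ is not nilpotent. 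Thus the problem is reduced to: $(G,N,M)$ a Camina triple with $G/N$ not a $p$-group and $M$ a non-nilpotent solvable $\pi'$-group, and the task is to establish (3).

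For this I would pass to a smallest non-nilpotent quotient: choose $L \trianglelefteq G$ with $L < M$ maximal subject to $\overline M := M/L$ being non-nilpotent. Then every proper $G$-quotient of $\overline M$ is nilpotent, which forces $\overline M$ to have a unique minimal $G$-invariant subgroup $\overline A$; since $\overline M$ is solvable, standard arguments show $\overline A$ is an elementary abelian $q$-group for a prime $q \notin \pi$, that $\overline M/\overline A$ is nilpotent, and (because a central extension of a nilpotent group is nilpotent) that $1 \ne C_{\overline M}(\overline A) \lneq \overline M$, so $\overline M/C_{\overline M}(\overline A)$ acts faithfully on $\overline A$. Now I bring in Theorem 1: by part (4) every $\chi \in \mathrm{Irr}(G \mid M)$ vanishes on $G \setminus N$, while by part (3) the family $\mathrm{Irr}(G \mid M)$ separates every element of $M$ from the identity. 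Rewriting the vanishing as $[\chi_N,\chi_N]_N = |G:N|$ for each such $\chi$ and running Clifford theory of $\chi$ relative to $L$ and to the preimage of $\overline A$ should pin down the relevant inertia groups and show that $\overline M$ (equivalently $G$) acts fixed-point-freely on $\overline A$; hence $\overline M$ is a Frobenius group with kernel $\overline A$ and a complement $\overline C$. It then remains to identify $\overline C$ as an elementary abelian $p$-group, with $p$ the prime governing how a Sylow subgroup of $G/N$ acts on $\overline A$.

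That last identification is the crux and the place I expect the real work. Solvability of $M$ alone yields only that $\overline C$ is \emph{some} Frobenius complement, which a priori could be non-abelian (quaternion, say) or divisible by several primes; to force $\overline C$ to be elementary abelian of prime-power order one must use the full strength of the Camina hypothesis — namely the simultaneous vanishing of \emph{all} of $\mathrm{Irr}(G \mid M)$ off $N$ together with the coprimality $\gcd(|M|,|G:N|)=1$. Concretely I would expect a counting argument weighing the degrees and values of the characters in $\mathrm{Irr}(G \mid M)$ on $\overline A$ against what the regular (Frobenius) orbit structure on $\overline A \setminus \{1\}$ permits, concluding that $|\overline C|$ is a prime power and that $\overline C$ has exponent $p$. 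Producing this sharp structural conclusion — rather than merely "$\overline M$ is Frobenius" — is the main obstacle, and it is exactly the step where the Camina triple condition on $G$, and not just the internal structure of $M$, is essential.
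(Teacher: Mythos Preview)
Your reduction through the easy alternatives matches the paper's. The divergence is in the endgame, where you significantly over-engineer the argument and also chase a typo.

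Once reduced to $M$ solvable and non-abelian, the paper simply takes $K \trianglelefteq M$ (not $G$) maximal with $M/K$ non-abelian; then $(M/K)'$ is the unique minimal normal subgroup of $M/K$, and Theorem~12.3 of Isaacs' \emph{Character Theory of Finite Groups} applies as a black box: either $M/K$ is a $p$-group (a nilpotent quotient, case~(2)), or $M/K$ is a Frobenius group whose \emph{kernel} $(M/K)'$ is an elementary abelian $p$-group and whose complement is abelian (case~(3)). Nothing beyond the solvability of $M$ from Theorem~1 is used at this point; the vanishing of $\mathrm{Irr}(G\mid M)$ on $G\setminus N$, Clifford theory relative to $L$ and the preimage of $\overline A$, and the coprimality $\gcd(|M|,|G:N|)=1$ play no role whatsoever in this step. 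Your minimal non-nilpotent $G$-quotient and the proposed character-counting argument are therefore unnecessary machinery.

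More importantly, the target you set yourself --- that the \emph{complement} $\overline C$ be an elementary abelian $p$-group --- is a misprint in the theorem statement. The paper's proof (via Isaacs~12.3) establishes this for the kernel; the complement is only shown to be abelian. An elementary abelian Frobenius complement would in any case have to be cyclic of prime order, which the paper neither claims nor proves, so the ``main obstacle'' you identify is not actually there. Your remark that the relevant prime $p$ should be ``the prime governing how a Sylow subgroup of $G/N$ acts on $\overline A$'' is also inconsistent with the residual case $M=Q$, where $\gcd(|M|,|G:N|)=1$ and no prime dividing $|G:N|$ can appear in $|\overline M|$.
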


In closing, we prove some facts about Camina pairs using Camina
triples results, and given the fact that they are special cases of
Camina triples. In \cite{camina1}, Camina defined a different
hypothesis that is equivalent to Camina pairs. Let $G$ be a finite
group with a proper normal subgroup $N \neq 1$ and a set of
irreducible non-trivial characters of $G$, $A=\{
\chi_{1}\cdots\chi_{n}\},$ where $n$ is a natural number, such that
\begin{enumerate}
\item $\chi_{i}$ vanishes on $ G\setminus N$ and
\item there exist natural numbers $\alpha_{1}\cdots\alpha_{n} > 0$
such that $\sum_{i=1}^{n}\alpha_{i}\chi_{i}$ is constant on
$N\setminus \{1\}.$
\end{enumerate}
We are able to identify the characters in Camina hypothesis in
\cite{camina1}. First, let $N$ be a normal subgroup of $G$ and
$\theta \in \rm{Irr}(N).$ The inertia group of $\theta$ in $G$
denoted by $T$ and defined by $\{g\in G\mid \theta^{g}=\theta\}.$
\begin{thm} \label{sixseventeen} Let
$(G,N) $ be a Camina Pair then, $A = \rm{Irr}(G|N)$.
\end{thm}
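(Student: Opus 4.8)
The plan is to establish that $\mathrm{Irr}(G\mid N)$ is a set of the type occurring in Camina's hypothesis and that it is the \emph{only} such set; together these give $A=\mathrm{Irr}(G\mid N)$. So I would prove two things: (i) the set $A_0:=\mathrm{Irr}(G\mid N)$ satisfies conditions (1) and (2); (ii) if $A=\{\chi_1,\dots,\chi_n\}$ is any set of non-trivial irreducible characters and $\alpha_1,\dots,\alpha_n$ positive integers with $f:=\sum_i\alpha_i\chi_i$ vanishing on $G\setminus N$ and constant on $N\setminus\{1\}$, then $A=\mathrm{Irr}(G\mid N)$.

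For (i), condition (1) for $A_0$ is exactly the Chillag--MacDonald vanishing statement quoted in the introduction: since $(G,N)$ is a Camina pair, every $\chi\in\mathrm{Irr}(G\mid N)$ vanishes on $G\setminus N$. For condition (2) I would decompose the regular character $\rho_G$. Writing $\delta_N$ for the class function equal to $1$ on $N$ and $0$ on $G\setminus N$, the inflation to $G$ of the regular character of $G/N$ equals $\sum_{\chi:\,N\le\ker\chi}\chi(1)\chi=|G:N|\,\delta_N$, so
\[
g_0:=\sum_{\chi\in\mathrm{Irr}(G\mid N)}\chi(1)\,\chi=\rho_G-|G:N|\,\delta_N .
\]
Evaluating at any $x\in N\setminus\{1\}$ gives $g_0(x)=-|G:N|$, a value independent of $x$; hence the weights $\alpha_\chi=\chi(1)$ witness (2) for $A_0$.

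For (ii), I would first show $A\subseteq\mathrm{Irr}(G\mid N)$. If some $\chi_i$ had $N\le\ker\chi_i$, then $\chi_i$ would be inflated from a non-trivial irreducible character $\psi$ of $G/N$ which, by (1), vanishes on $(G/N)\setminus\{1\}$; but then $\psi(1)^2=\sum_{\eta\in\mathrm{Irr}(G/N)}|\eta(1)|^2=|G/N|$, forcing $\mathrm{Irr}(G/N)=\{\psi\}$ and $G/N=1$, contradicting $N<G$. Next I would compare $f$ with $g_0$: the class functions vanishing on $G\setminus N$ and constant on $N\setminus\{1\}$ form a two-dimensional space, and imposing orthogonality to $1_G$ (which $f$ satisfies because it is a sum of non-trivial irreducibles, and $g_0$ satisfies by the displayed identity) cuts this down to a line, spanned by $g_0\neq 0$. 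Hence $f=\lambda g_0$ for some scalar $\lambda$, and comparing values at $1$ gives $\lambda=f(1)/g_0(1)>0$. Taking inner products with an arbitrary $\chi\in\mathrm{Irr}(G)$: $\langle f,\chi\rangle=\lambda\langle g_0,\chi\rangle$ equals $\lambda\chi(1)>0$ if $\chi\in\mathrm{Irr}(G\mid N)$ and $0$ otherwise, while on the other hand $\langle f,\chi\rangle>0$ precisely when $\chi\in A$. Therefore $A=\mathrm{Irr}(G\mid N)$.

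The character-theoretic bookkeeping (the regular-character decomposition, the Chillag--MacDonald lemma) is routine; the point that needs the most care is ruling out characters trivial on $N$ from $A$ and then observing that, after also imposing orthogonality to $1_G$, the space of admissible class functions collapses to a single line through $g_0$, which is what pins $A$ down uniquely. I expect this uniqueness step — the passage from "$f$ is some positive-integer combination satisfying (1)--(2)" to "$f$ is a positive multiple of $\sum_{\chi\in\mathrm{Irr}(G\mid N)}\chi(1)\chi$" — to be the crux of the argument.
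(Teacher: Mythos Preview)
Your argument is correct. The containment $A\subseteq\mathrm{Irr}(G\mid N)$ is handled exactly as in the paper (an irreducible of $G/N$ vanishing off the identity would have to be the regular character). For the reverse inclusion your method diverges from the paper's: the paper works one $\theta\in\mathrm{Irr}(N)\setminus\{1_N\}$ at a time, finds some $\chi_i\in A$ lying over $\theta$, and then uses that $\chi_i$ vanishes off $N$ together with the explicit formula for $\theta^G$ on $N$ to see that $\chi_i$ is a scalar multiple of $\theta^G$, hence the \emph{unique} irreducible constituent of $\theta^G$; this gives $\mathrm{Irr}(G\mid\theta)\subseteq A$ for every such $\theta$ and therefore $\mathrm{Irr}(G\mid N)\subseteq A$. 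Your route instead is pure linear algebra on class functions: the conditions ``vanishes on $G\setminus N$, constant on $N\setminus\{1\}$, orthogonal to $1_G$'' cut out a line, and both $f$ and $g_0=\rho_G-|G{:}N|\,\delta_N$ lie on it, so $f=\lambda g_0$ and the supports coincide. Your approach is shorter and avoids Clifford theory; the paper's approach yields the extra structural fact that $|\mathrm{Irr}(G\mid\theta)|=1$ for every non-principal $\theta\in\mathrm{Irr}(N)$. Your part~(i) is not strictly needed for the theorem as stated (the set $A$ is given as part of Camina's hypothesis), but it is the natural companion statement showing the hypothesis is non-vacuous when $(G,N)$ is a Camina pair.
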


Our last theorem, states some new conditions for a pair $(G,N)$ to
be a Camina pair.
\begin{thm} \label{sixeighteen}
Let $G$ be a finite group and $ N\vartriangleleft G$, then the
following are equivalent:
\begin{enumerate}
\item $(G,N)$ is a Camina pair.
\item $V(G|N)$ = $N$.
\item There is no $x$ in $N$ such that $\chi (x) =0 $ for all $
\chi$'s in $ \rm{Irr}(G|N)$, and if $ x \in G\setminus N$, then
$\chi(x) =0 $ for all $ \chi$'s in $ \rm{Irr}(G|N)$
\end{enumerate}
\end{thm}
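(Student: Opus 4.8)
The plan is to prove the cycle of implications $(1)\Rightarrow(3)\Rightarrow(2)\Rightarrow(1)$, leaning on the results already recorded in the excerpt. Throughout, set $S=\{g\in G:\chi(g)\neq 0\text{ for some }\chi\in\mathrm{Irr}(G\mid N)\}$, so that by definition $V(G\mid N)=\langle S\rangle$. Note first that $\mathrm{Irr}(G\mid N)$ is nonempty: otherwise $N\le\bigcap_{\chi\in\mathrm{Irr}(G)}\ker\chi=1$, contradicting $N\neq 1$; also $1\in S$ since $\chi(1)>0$.

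For $(1)\Rightarrow(3)$: assume $(G,N)$ is a Camina pair. The second clause of $(3)$ is exactly the vanishing theorem of Chillag and MacDonald quoted in the Introduction, namely that every $\chi\in\mathrm{Irr}(G)$ with $N\nleq\ker\chi$ vanishes on $G\setminus N$. The first clause of $(3)$ — that no $x\in N$ satisfies $\chi(x)=0$ for all $\chi\in\mathrm{Irr}(G\mid N)$ — is precisely the Corollary to Theorem 1. So this implication is immediate.

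For $(3)\Rightarrow(2)$: the first clause of $(3)$ says every $x\in N$ lies in $S$, hence $N\subseteq S$; the second clause says every $x\in G\setminus N$ lies outside $S$, hence $S\subseteq N$. Thus $S=N$, and since $N$ is a subgroup, $V(G\mid N)=\langle S\rangle=N$.

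For $(2)\Rightarrow(1)$, the one step requiring a genuine argument: assume $V(G\mid N)=N$. Then $S\subseteq\langle S\rangle=N$, so every $\chi\in\mathrm{Irr}(G\mid N)$ vanishes on $G\setminus N$. Fix $x\in G\setminus N$ and $n\in N$. Since $xn\in G\setminus N$ as well, $\chi(x)=0=\chi(xn)$ for every $\chi\in\mathrm{Irr}(G\mid N)$; and every $\chi\in\mathrm{Irr}(G)$ with $N\le\ker\chi$ is inflated from $G/N$, so trivially $\chi(x)=\chi(xn)$. As these two families exhaust $\mathrm{Irr}(G)$, we get $\chi(x)=\chi(xn)$ for all $\chi\in\mathrm{Irr}(G)$; since the irreducible characters of $G$ separate its conjugacy classes, $x$ and $xn$ are conjugate in $G$. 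As $n\in N$ was arbitrary, $x$ is conjugate to all of $xN$, so $(G,N)$ is a Camina pair. I expect the only point needing care is the bookkeeping here: verifying that the two families of irreducible characters really do account for all of $\mathrm{Irr}(G)$ and that identical columns of the character table force conjugacy; everything else reduces to unwinding the definition of $V(G\mid N)$ or invoking Theorem 1 and its Corollary.
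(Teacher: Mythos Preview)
Your proof is correct and follows the same cycle $(1)\Rightarrow(3)\Rightarrow(2)\Rightarrow(1)$ as the paper; the only difference is cosmetic, in that the paper dispatches $(2)\Rightarrow(1)$ by citing Theorem~\ref{sixtwo} and Lemma~\ref{sixthree} rather than inlining the character-table argument as you do. Your handling of $(1)\Rightarrow(3)$ via the Corollary to Theorem~1 is in fact cleaner than the paper's own somewhat elliptical justification of that step.
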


\section{ Camina triples }

In this section, we prove Theorem 1 and 2 along with some facts
about Camina triples. First, we prove some equivalent conditions for
a triple $(G,N,M)$ to be a Camina triple.

\begin{theorem} \label{sixtwo}
If $ 1 \neq M < N $ are two normal subgroups of a finite group $G$,
then the following are equivalent:
\begin{enumerate}
\item $(G,N,M)$ is  a Camina triple.
\item $|C_{G}(g)|=|C_{G/M}(Mg)|$ for
every $g\in G\setminus N.$
\item For every $g\in G\setminus N,$
we have $\chi(g)=0$ for all $\chi \in \rm{Irr}(G\mid M).$
\item $ V(G|M)\leq N.$
\item For all $g\in G\setminus N$ and $ z\in M,$ there exists $ y \in G$ such that
$[g,y] = z.$

\end{enumerate}
\end{theorem}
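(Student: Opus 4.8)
The plan is to establish the single cycle of implications among (1), (2), (3), (4) — in fact each link can be made an ``if and only if'' — together with the separate, purely formal equivalence $(1)\Leftrightarrow(5)$. The equivalence $(1)\Leftrightarrow(5)$ requires no machinery: if $g\in G\setminus N$ and $z\in M$, then $gz\in gM$, and $gz$ is conjugate to $g$ exactly when $gz=y^{-1}gy$ for some $y\in G$, i.e. exactly when $z=g^{-1}y^{-1}gy=[g,y]$. Thus ``$g$ is conjugate to every element of $gM$'' says precisely that every $z\in M$ arises as $[g,y]$ for some $y\in G$, and running this over all $g\in G\setminus N$ identifies (1) with (5) verbatim.

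For $(1)\Leftrightarrow(2)$ I would pass to the natural projection $\pi\colon G\to G/M$. Since $M\trianglelefteq G$, one always has $\pi(cl_G(g))=cl_{G/M}(Mg)$, and $\pi^{-1}\bigl(cl_{G/M}(Mg)\bigr)$ is a union of exactly $|cl_{G/M}(Mg)|$ cosets of $M$, so $|cl_G(g)|\le|M|\,|cl_{G/M}(Mg)|$ unconditionally. The key point is that $g$ is conjugate to all of $gM$ iff $cl_G(g)$ is a union of $M$-cosets: if $gM\subseteq cl_G(g)$ and $h=y^{-1}gy\in cl_G(g)$, then $hM=y^{-1}(gM)y\subseteq cl_G(g)$ because $y^{-1}My=M$, so $cl_G(g)$ contains the $M$-coset of each of its elements; conversely such a union contains $gM$ itself. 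A union of $M$-cosets that surjects onto $cl_{G/M}(Mg)$ must be all of $\pi^{-1}\bigl(cl_{G/M}(Mg)\bigr)$, so the Camina condition at $g$ is equivalent to $|cl_G(g)|=|M|\,|cl_{G/M}(Mg)|$. Rewriting via $|cl_G(g)|=|G:C_G(g)|$ and $|M|\,|cl_{G/M}(Mg)|=|G:C_{G/M}(Mg)|$ turns this into $|C_G(g)|=|C_{G/M}(Mg)|$, giving $(1)\Leftrightarrow(2)$.

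The step $(2)\Leftrightarrow(3)$ is a column-orthogonality computation. Identifying $\mathrm{Irr}(G/M)$ with $\{\chi\in\mathrm{Irr}(G):M\le\ker\chi\}$ gives the disjoint decomposition $\mathrm{Irr}(G)=\mathrm{Irr}(G/M)\sqcup\mathrm{Irr}(G\mid M)$, and inflation gives $\psi(g)=\psi(Mg)$ for $\psi\in\mathrm{Irr}(G/M)$. Applying column orthogonality in $G$ at $g$ and in $G/M$ at $Mg$ yields
$$|C_G(g)|-|C_{G/M}(Mg)|=\sum_{\chi\in\mathrm{Irr}(G)}|\chi(g)|^2-\sum_{\psi\in\mathrm{Irr}(G/M)}|\psi(g)|^2=\sum_{\chi\in\mathrm{Irr}(G\mid M)}|\chi(g)|^2 .$$
This is a sum of non-negative reals, so for fixed $g\in G\setminus N$ it vanishes iff $\chi(g)=0$ for every $\chi\in\mathrm{Irr}(G\mid M)$; ranging over all $g\in G\setminus N$ gives $(2)\Leftrightarrow(3)$. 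Finally $(3)\Leftrightarrow(4)$ is the definition of the relative vanishing-off subgroup: $V(G\mid M)$ is generated by those $g\in G$ at which some $\chi\in\mathrm{Irr}(G\mid M)$ is non-zero, so (3) forces all these generators into the subgroup $N$, hence $V(G\mid M)\le N$; conversely $V(G\mid M)\le N$ puts every such $g$ in $N$, which is exactly the contrapositive of (3).

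I do not anticipate a serious obstacle: the argument is a direct generalization of the Chillag--MacDonald characterization of Camina pairs. The only place needing genuine care is the coset-counting in $(1)\Leftrightarrow(2)$ — verifying that ``$g$ conjugate to all of $gM$'' is equivalent to ``$cl_G(g)$ is a union of $M$-cosets'' and that this is detected precisely by the numerical identity $|cl_G(g)|=|M|\,|cl_{G/M}(Mg)|$ — along with keeping the left/right coset conventions consistent throughout.
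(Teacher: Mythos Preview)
Your proof is correct and follows essentially the same approach as the paper: the commutator reformulation for $(1)\Leftrightarrow(5)$, the class--size/centralizer--size count for $(1)\Rightarrow(2)$, the column--orthogonality split over $\mathrm{Irr}(G/M)\sqcup\mathrm{Irr}(G\mid M)$ for $(2)\Rightarrow(3)$, and the definition of $V(G\mid M)$ for $(3)\Rightarrow(4)$ are all exactly what the paper does. The one structural difference is how the loop is closed: the paper proves $(4)\Rightarrow(1)$ directly by observing that $x$ and $yx$ (for $y\in M$) take the same value under every irreducible character of $G$ --- zero on $\mathrm{Irr}(G\mid M)$ by hypothesis, equal on $\mathrm{Irr}(G/M)$ because $Mx=Myx$ --- and hence are conjugate; you instead upgrade each link to a biconditional and return via $(4)\Rightarrow(3)\Rightarrow(2)\Rightarrow(1)$. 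Both are fine; your route requires the extra observation that ``$gM\subseteq cl_G(g)$'' is equivalent to ``$cl_G(g)$ is a union of $M$-cosets,'' which you handle correctly, while the paper's route uses the standard fact that the irreducible characters separate conjugacy classes.
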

\begin{proof}
First, we show that (1) implies (2). Assume that $(G,N,M)$ is a
Camina triple and let $g\in G\setminus N.$ Notice that $cl(g)=
\cup_{x\in G} (Mg)^{x}$. Hence, $|G:C_{G}(g)|=|G/M:C_{G/M}(Mg)||M|,$
and so $|C_{G}(g)|=|C_{G/M}(Mg)|$ as desired. We now show that (2)
implies (3). Assume (2) and let $g\in G\setminus N.$ By the Second
Orthogonality Relation, we have
$$|{\bf{C}}_{G}(g)|=\sum_{\chi \in
\rm{Irr}(G)}|\chi(g)|^{2}=\sum_{\chi\in \rm{Irr}(G\mid
M)}|\chi(g)|^{2}+\sum_{\chi \in \rm{Irr}(G/M)}|\chi(g)|^{2}.$$ But
we know by (2) that $|C_{G}(g)|=|C_{G/M}(Mg)|=\sum_{\chi \in
\rm{Irr}(G/M)}|\chi(g)|^{2}.$ Hence, we obtain $\sum_{\chi\in
\rm{Irr}(G\mid M)}|\chi(g)|^{2}=0.$ Since $|\chi(g)|^{2}\ge 0$ for
all $\chi\in \rm{Irr}(G\mid M),$ we deduce that $\chi(g)=0$ for all
$\chi\in \rm{Irr}(G\mid M).$ Next, we prove (3) implies (4). Assume
that for every $g\in G\setminus N,$ $\chi(g)=0$ for all $\chi \in
\rm{Irr}(G\mid M).$ Hence, all the generators of $V(G\mid M)$ are
contained in $N.$ Thus, $V(G\mid M)\le N$ as desired. Now, we show
that (4) implies (1). Assume that $ V(G|M)\leq N$ and let $x\in
G\setminus N$ and $ y\in M$. Hence, $yx\not\in N$. Thus $yx\not\in
V(G|M)$. So, for any $\chi_{i}\in \rm{Irr}(G|M)$, $ \chi_{i}(x)=
\chi_{i}(yx)= 0$. Recall that $ \rm{Irr}(G)\setminus
\rm{Irr}(G|M)=\rm{ Irr}(G/M).$ Write $\rm{Irr} (G/M)
=\{\overline{\phi}_{1},\cdots, \overline{\phi}_{r}\}.$ For each $
\overline{\phi}_{i}$ there exists $\phi_{i}\in \rm{Irr}(G)\setminus
\rm{Irr}(G\mid M)$ such that $\phi_{i}(x) = \overline{\phi}_{i}(Mx)
= \phi_{i}(yx)$. Hence, $x$ and $yx$ have the same character values
for all irreducible characters of $G$. Since the irreducible
characters form a basis for the class functions, all class functions
have the same value on $x$ and $xy$. This implies that $x$ and $xy$
are in the same class. Hence, $x$ is conjugate to all of $xM.$ We
conclude that $(G,N,M) $ is a Camina triple. Thus, (4) implies (1).

To finish the proof of the theorem, it is enough to show that (1) is
equivalent to (5). First assume that $(G,N,M)$ is a Camina triple;
that is, if $g\in G\setminus N,$ then $g$ is conjugate to all of
$gM.$ Hence, if $z\in M,$ then there exists $y\in G$ such that
$y^{-1}gy=gz.$ It follows that $ g^{-1}y^{-1}gy=z.$ Conversely,
suppose that for all $g\in G\setminus N$ and $z\in M$ there exists
$y\in G$ such that $[g,y]= z.$ Fix $g\in G\setminus N$ and $z\in M.$
We need to show that $g$ is conjugate to $gz.$ we know there exists
$y$ such that $ g^{-1}y^{-1}gy=z.$ This implies that  $y^{-1}gy=gz.$
Hence, $g$ is conjugate to every element in $gM,$ and $(G,N,M)$ is a
Camina triple as required.
\end{proof}

The following lemma describes the relationship between two Camina
triples in the same group.
\begin{lemma}
If $(G,N_{1},M) $ and $(G,N_{2},M) $ are Camina triples, then
$(G,N_{1}\cap N_{2},M)$ is a Camina triple.
\end{lemma}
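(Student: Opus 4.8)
The plan is to bypass all machinery and argue straight from the definition, using only the elementary set identity
$$ G \setminus (N_1 \cap N_2) = (G \setminus N_1) \cup (G \setminus N_2). $$
First I would dispose of the structural bookkeeping: $N_1 \cap N_2$ is normal in $G$ because $N_1$ and $N_2$ are, and $M \le N_1 \cap N_2$ since $M$ lies in each of $N_1$ and $N_2$, while $M \ne 1$; so $1 \ne M \le N_1 \cap N_2$ is a legitimate pair of normal subgroups to which the definition of a Camina triple applies.

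Next comes the whole point. Fix an arbitrary $g \in G \setminus (N_1 \cap N_2)$. By the displayed identity, $g \notin N_1$ or $g \notin N_2$. If $g \notin N_1$, then since $(G, N_1, M)$ is a Camina triple, $g$ is conjugate in $G$ to every element of the coset $gM$; if instead $g \notin N_2$, the identical conclusion follows from $(G, N_2, M)$ being a Camina triple. Either way, $g$ is conjugate to all of $gM$. As $g$ was arbitrary in $G \setminus (N_1 \cap N_2)$, this is precisely the statement that $(G, N_1 \cap N_2, M)$ is a Camina triple. Equivalently, one could invoke the equivalence $(1) \Leftrightarrow (4)$ of Theorem~\ref{sixtwo}: the two hypotheses say $V(G \mid M) \le N_1$ and $V(G \mid M) \le N_2$, hence $V(G \mid M) \le N_1 \cap N_2$, and the same theorem returns the conclusion.

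There is essentially no obstacle here; the only thing to watch is the degenerate possibility that $M$ happens to equal $N_1 \cap N_2$ even though $M$ is proper in each $N_i$. In that case the conclusion is read as ``$(G, M)$ is a Camina pair'', and the very same argument still applies because $G \setminus M = (G \setminus N_1) \cup (G \setminus N_2)$ continues to hold. The substance of the lemma is simply the remark that the Camina-triple condition is a universally quantified condition over the elements \emph{outside} $N$, and such a condition is automatically inherited by the intersection of the $N$'s.
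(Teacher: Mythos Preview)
Your argument is correct and mirrors the paper's proof exactly: both observe that $1 < M \le N_1 \cap N_2$ and then use the set identity $G \setminus (N_1 \cap N_2) = (G \setminus N_1) \cup (G \setminus N_2)$ to conclude that any $g$ outside $N_1 \cap N_2$ is already conjugate to all of $gM$ by one of the two hypotheses. The alternative route via $V(G\mid M) \le N_1 \cap N_2$ and the remark on the degenerate case $M = N_1 \cap N_2$ are correct but not needed, since the paper's definition allows $M = N$.
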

\begin{proof}
Notice that $1 < M \leq N_{1} \cap N_{2}$. If $g \in G \setminus
N_{1} \cap N_{2}$, then either $g \in G \setminus N_{1}$ or $g \in G
\setminus N_{2}$. In either case, $g$ is conjugate to all of $gM$.
Hence, $(G, N_{1} \cap N_{2}, M)$ is a Camina triple as desired.
\end{proof}
We now show that Camina pairs are special cases of Camina triples.
\begin{lemma} \label{sixthree}
The triple $ (G, N, N)$ is a Camina triple if and only if $(G,N)$ is
a Camina pair.
\end{lemma}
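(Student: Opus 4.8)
The plan is to argue directly from the two definitions, since the statement is really just an unwinding of notation rather than a substantive fact. Recall that $(G,N,M)$ being a Camina triple means $1 < M \le N$ are normal subgroups of $G$ and every $g \in G \setminus N$ is conjugate to all of $gM$; and $(G,N)$ being a Camina pair means $N \trianglelefteq G$ and every $x \in G \setminus N$ is conjugate to all of $xN$. So I would simply specialize $M = N$ in the triple definition and observe that the conjugacy condition ``$g$ is conjugate to all of $gM$ for every $g \in G\setminus N$'' becomes word-for-word the conjugacy condition in the pair definition.

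For the forward direction, I would assume $(G,N,N)$ is a Camina triple; then $N \trianglelefteq G$ and every $g \in G \setminus N$ is conjugate to all of $gN$, which is exactly the assertion that $(G,N)$ is a Camina pair. For the reverse direction, I would assume $(G,N)$ is a Camina pair; the only thing to check beyond the conjugacy condition is that the hypotheses of the triple definition hold with $M = N$, namely that $N$ is a nontrivial normal subgroup with $1 < N \le N$ — the inequality $N \le N$ is trivial, and $N \neq 1$ is implicit in the Camina pair setup (otherwise $G \setminus N = G$ and, more to the point, the pair would be degenerate), so $(G,N,N)$ is a Camina triple.

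There is no real obstacle here; the lemma is essentially a consistency check confirming that the ``special case'' remark made right after the definition of Camina triple is literally correct. If one wanted to be scrupulous, the single point worth a sentence is the bookkeeping on the nontriviality/properness conventions for $N$, to make sure the triple definition's standing hypothesis $1 < M \le N$ is met when $M = N$.
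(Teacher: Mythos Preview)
Your proposal is correct and matches the paper's own argument: both simply unwind the definitions, noting that with $M=N$ the conjugacy condition in the triple definition coincides with that in the pair definition. The paper's proof is a one-sentence observation to this effect; your version is the same idea with a bit more bookkeeping on the $1<N\le N$ hypothesis.
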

\begin{proof}
Observe that $(G,N)$ is a Camina pair if and only if for every $g
\in G\setminus N $, we have $ cl(g) = gN $. This occurs if and only
if $ (G, N, N)$ is a Camina triple.
\end{proof}

We now prove a fact about the center of a group $G$ in the case when
$(G,N,M) $ is a Camina triple. Note that it is not difficult to see
that the intersection of $\bf{Z}(G)$ and the set of elements in
$G\setminus N$ has to be the empty set.
\begin{lemma} \label{sixseven}
If $(G,N,M)$ is a Camina triple, then the following are true:
\begin{enumerate}
\item $\bf{Z}(G)\leq N$ and
\item if $K \lhd G$ and $ K< M$, then $(G/K , N/K, M/K)$ is a Camina triple.
\end{enumerate}
\end{lemma}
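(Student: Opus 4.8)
The plan is to treat the two parts separately, both by direct appeal to the definition of a Camina triple; in fact the remark immediately preceding the lemma already contains the idea for part (1).

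For part (1), I would argue by contradiction. Suppose some $z \in {\bf Z}(G)$ lies in $G \setminus N$. Since $z$ is central, $cl(z) = \{z\}$. But the Camina triple hypothesis forces $z$ to be conjugate to every element of $zM$, so $zM = \{z\}$; this means $M = 1$, contradicting $1 < M$. Hence ${\bf Z}(G) \cap (G \setminus N) = \emptyset$, i.e. ${\bf Z}(G) \leq N$.

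For part (2), I would first check that $(G/K, N/K, M/K)$ is even a legitimate candidate: since $K \lhd G$ with $K < M \leq N$ and $M, N \lhd G$, the images $M/K$ and $N/K$ are normal subgroups of $G/K$ with $1 < M/K \leq N/K$ (strictness of $K<M$ gives $M/K \neq 1$). Now take any $gK \in G/K \setminus N/K$; then necessarily $g \in G \setminus N$. The natural projection $G \to G/K$ carries the coset $gM$ onto $(gK)(M/K)$, so it suffices to show $gK$ is conjugate in $G/K$ to $gmK$ for each $m \in M$. By the Camina triple hypothesis applied in $G$, $g$ is conjugate to $gm \in gM$, say $x^{-1}gx = gm$ for some $x \in G$; reducing modulo $K$ gives $(xK)^{-1}(gK)(xK) = gmK$. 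Thus $gK$ is conjugate to all of $(gK)(M/K)$, and $(G/K, N/K, M/K)$ is a Camina triple. (Alternatively one could verify condition (5) of Theorem \ref{sixtwo}: given $gK \in G/K \setminus N/K$ and $zK \in M/K$, lift to $g \in G \setminus N$ and $z \in M$, choose $y$ with $[g,y]=z$, and reduce modulo $K$ to obtain $[gK,yK] = zK$.)

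There is essentially no serious obstacle here. The only points requiring any care are verifying the inclusions $1 < M/K \leq N/K$ so that the quotient triple makes sense, and observing that the image of the coset $gM$ under $G \to G/K$ is precisely $(gK)(M/K)$; once these are in place both statements are immediate from the definition (for part (1)) and from the fact that conjugacy is preserved under quotient maps (for part (2)).
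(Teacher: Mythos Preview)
Your proof is correct. Part (1) matches the paper's argument exactly. For part (2), the paper takes a slightly different route: rather than working directly with the conjugacy definition as you do, it invokes the character-theoretic characterization from Theorem \ref{sixtwo}, observing that every $\chi \in {\rm Irr}(G/K \mid M/K)$ lifts to a character in ${\rm Irr}(G \mid M)$ and therefore vanishes on $G/K \setminus N/K$. Your direct approach via the definition (conjugacy is preserved under the quotient map $G \to G/K$) is more elementary and self-contained, requiring no appeal to characters; the paper's approach is marginally quicker once Theorem \ref{sixtwo} is in hand. Both are entirely valid, and your alternative via condition (5) of Theorem \ref{sixtwo} would work equally well.
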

\begin{proof}
If $ g \in \bf{Z}(G)$, then $g$ is only conjugate to itself. Hence
$g $ is not conjugate to all of $gM$, and so $g \in N$. Therefore
$Z(G)\leq N$. Now, let $K \lhd G$, with $ K< M.$ Hence,  $1 < M/K
\leq N/K < G/K$. Since every $\chi \in \rm{Irr}(G|M)$  vanishes on
$G\setminus N,$ every $\chi \in \rm{Irr}(G/K|M/K)$ vanishes on $G/K
\setminus N/K$. It follows that $(G/K , N/K, M/K)$ is a Camina
triple as desired.
\end{proof}

Next, consider the terms of the upper central series of $G$ when
$(G,N,M)$ is a Camina triple. Let $ Z_{1} =\bf{Z}(G) $ and
$Z_{i}/Z_{i-1}= \textbf{Z}(G/Z_{i-1})$ for $i>1$.
\begin{lemma} \label{sixeight}
If  $(G,N,M)$ is a Camina triple, and $Z_{m} < M$, then $ Z_{m+1}
\leq N$.
\end{lemma}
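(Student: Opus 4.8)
The plan is to reduce the statement to Lemma~\ref{sixseven} by passing to the quotient by $Z_m$. First I would record the trivial structural fact that every term $Z_m$ of the upper central series is characteristic in $G$, hence a normal subgroup of $G$; combined with the hypothesis $Z_m < M$, this puts us precisely in the setting of part~(2) of Lemma~\ref{sixseven} with the normal subgroup $K$ taken to be $Z_m$. Applying that lemma yields that $(G/Z_m,\, N/Z_m,\, M/Z_m)$ is again a Camina triple.

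Next I would apply part~(1) of Lemma~\ref{sixseven} to this new Camina triple, which gives $\mathbf{Z}(G/Z_m) \leq N/Z_m$. By the definition of the upper central series, $\mathbf{Z}(G/Z_m) = Z_{m+1}/Z_m$, so $Z_{m+1}/Z_m \leq N/Z_m$; pulling back along the canonical projection $G \to G/Z_m$ then gives $Z_{m+1} \leq N$, which is the desired conclusion.

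The only point that requires even momentary attention is verifying the hypothesis of Lemma~\ref{sixseven}(2): we need $Z_m$ to be a \emph{proper} subgroup of $M$, and this is exactly what the assumption $Z_m < M$ provides (in the degenerate case $Z_m = 1$ the assertion collapses to Lemma~\ref{sixseven}(1) applied to $G$ itself). Beyond this bookkeeping there is no genuine obstacle: the content is simply that the property ``the center is contained in the middle term'' is inherited by the central quotient of a Camina triple, invoked a single step along the upper central series.
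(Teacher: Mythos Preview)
Your proposal is correct and follows exactly the same route as the paper: apply Lemma~\ref{sixseven}(2) with $K=Z_m$ to obtain the Camina triple $(G/Z_m,\,N/Z_m,\,M/Z_m)$, then apply Lemma~\ref{sixseven}(1) to get $\mathbf{Z}(G/Z_m)=Z_{m+1}/Z_m\le N/Z_m$, and pull back. The only difference is that you spell out the normality of $Z_m$ and the correspondence step explicitly, which the paper leaves implicit.
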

\begin{proof}
By Lemma \ref{sixseven} part 2, $(G/Z_{m}, N/Z_{m}, M/Z_{m})$ is a
Camina triple. So applying Lemma \ref{sixseven} part 1 to $G/Z_{m}$,
we get $\textbf{Z}(G/Z_{m})\leq N/Z_{m}$. Hence $Z_{m+1} \leq N$ as
desired.
\end{proof}
Now, we need to state this very useful theorem, which is Theorem D
in \cite{Isaacs3}, due to Berkovich.
\begin{theorem}
Let $N$ be a normal subgroup of $G$ and suppose that every member of
cd$(G\mid N')$ is divisible by some fixed prime $p.$ Then $N$ is
solvable and has a normal $p$-complement.
\end{theorem}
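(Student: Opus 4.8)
The plan is to prove the statement by induction on $|G|$, taking Thompson's normal $p$-complement theorem --- if $p$ divides $\chi(1)$ for every nonlinear $\chi\in\mathrm{Irr}(H)$ then $H$ has a normal $p$-complement --- as the sole external input. If $N$ is abelian then $N'=1$, $\mathrm{cd}(G\mid N')=\emptyset$, and both conclusions are immediate; so assume $N'\ne 1$. The basic reduction is that the hypothesis descends to quotients: for $K\lhd G$ with $K\le N$, inflation identifies $\mathrm{Irr}(G/K\mid (N/K)')$ with $\{\chi\in\mathrm{Irr}(G\mid N'):K\le\ker\chi\}$ --- one uses $(N/K)'=N'K/K$ and the observation that $K\le\ker\chi$ makes $N'K\le\ker\chi$ equivalent to $N'\le\ker\chi$ --- so that $\mathrm{cd}(G/K\mid (N/K)')\subseteq\mathrm{cd}(G\mid N')$ and $(G/K,N/K)$ again satisfies the hypothesis. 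Hence for any $1\ne K\lhd G$ with $K\le N$, induction yields that $N/K$ is solvable with a normal $p$-complement, and the whole problem reduces to a few ``irreducible'' configurations.

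For solvability, take a counterexample $(G,N)$ with $|G|$ minimal. Then $N/K$ is solvable for every $1\ne K\lhd G$ with $K\le N$, so --- since $N$ itself is not solvable --- no minimal normal subgroup of $G$ contained in $N$ can be abelian. Choose such a subgroup $L$: then $L=S_1\times\cdots\times S_t$ with the $S_i$ copies of a nonabelian simple group $S$, so $L$ is perfect and therefore $L\le N'$. By Thompson's theorem $S$ has a nonlinear irreducible character of degree prime to $p$, for otherwise $S$ would have a normal $p$-complement, which is impossible for a nonabelian simple group. A Clifford-theoretic argument then promotes this to an irreducible character $\chi$ of $G$ of degree prime to $p$ lying over a nontrivial character of $L$; since $L\le N'$ and $L\not\le\ker\chi$, this $\chi$ lies in $\mathrm{Irr}(G\mid N')$ and has $p\nmid\chi(1)$, contradicting the hypothesis. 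Hence $N$ is solvable.

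Knowing $N$ solvable, the existence of a normal $p$-complement is established by a second induction on $|G|$. If $\mathbf O_{p'}(N)\ne 1$ it contains a minimal normal subgroup $K$ of $G$, necessarily a $p'$-group; by induction $N/K$ has a normal $p$-complement, whose preimage in $N$ is a normal $p'$-subgroup of $p$-power index in $N$, hence a normal $p$-complement of $N$. So assume $\mathbf O_{p'}(N)=1$; then $P:=\mathbf O_p(N)$ equals $\mathbf F(N)$, so $\mathbf C_N(P)\le P$, and the claim is that $N=P$. If not, then $N/P\ne 1$ has $\mathbf O_p(N/P)=1$ and hence a nontrivial normal $q$-subgroup for some prime $q\ne p$; lifting it and splitting off a Sylow $q$-subgroup one obtains a $q$-subgroup $R$ normalizing $P$ with $\mathbf C_R(P)=1$, so that $R$ acts faithfully on $P$ and therefore, the action being coprime, nontrivially on $\mathrm{Irr}(P)$ and even on the linear characters of $P$. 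A Clifford-theoretic argument --- this time over the $p$-group $P$, or rather over a well-chosen nontrivial character of $N'$, feeding in this faithful coprime action --- then yields an irreducible character of $G$ of degree prime to $p$ that is nontrivial on $N'$, i.e.\ a member of $\mathrm{Irr}(G\mid N')$ of degree prime to $p$, again contradicting the hypothesis. Hence $N=P$ is a $p$-group, and together with the first reduction this produces a normal $p$-complement of $N$ in every case.

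The step I expect to be the real obstacle occurs in both halves in essentially the same shape: starting from a character of degree prime to $p$ on a normal subgroup ($L$, respectively $P$), one must manufacture an \emph{irreducible} character of the \emph{whole} group $G$ that still has degree prime to $p$ \emph{and} still fails to contain $N'$ in its kernel. The degree condition forces one to choose the small character carefully within its $G$-orbit, so that its inertia group has index prime to $p$ in $G$ --- exploiting the product-of-simple-groups structure of $L$ together with Thompson's theorem inside the factors in the first case, and the faithful coprime action of $R$ in the second --- and to control the ramification so that some constituent lying over it in the inertia group already has $p'$-degree. The kernel condition is automatic in the solvability argument, where $L\le N'$, but in the $p$-complement argument it is a genuine extra constraint that must be built into the choice, typically by arranging the character to lie over a nontrivial character of $N'$ rather than merely of $P$. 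Getting this extraction to go through uniformly --- and not only in the transparent case where the small character is already $G$-invariant and extends to its inertia group --- is where the genuine content of the theorem resides.
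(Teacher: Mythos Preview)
The paper does not prove this statement at all: it is quoted verbatim as Theorem~D of Isaacs--Knutson \cite{Isaacs3} (attributed there to Berkovich) and then used as a black box in the proof of Lemma~\ref{sixthirteen}. So there is no ``paper's own proof'' to compare your attempt against; what follows is an assessment of your sketch on its own merits.

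Your reductions are sound. The passage of the hypothesis to quotients $G/K$ with $K\lhd G$, $K\le N$, via $(N/K)'=N'K/K$ is correct, and the minimal--counterexample set--up in both halves is the natural one. In the solvability half the observation that a nonabelian minimal normal $L\le N$ is perfect, hence lies in $N'$, is exactly what is needed to make $\mathrm{Irr}(G\mid L)\subseteq\mathrm{Irr}(G\mid N')$. In the $p$-complement half the chain $\mathbf O_{p'}(N)=1\Rightarrow \mathbf F(N)=\mathbf O_p(N)=P\Rightarrow \mathbf O_p(N/P)=1$ is also correct.

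The substantive gap is precisely the one you flag, and it is a real one. You need, given a nonabelian minimal normal subgroup $L=S_1\times\cdots\times S_t$ of $G$, to produce $\chi\in\mathrm{Irr}(G\mid L)$ with $p\nmid\chi(1)$. Thompson's theorem only gives you a nonlinear $\theta\in\mathrm{Irr}(S)$ of $p'$-degree; promoting this to $G$ is not a matter of ``choosing within the orbit''. For instance, taking $\psi=\theta\times 1_{S}\times\cdots\times 1_{S}$ forces $I_G(\psi)\le N_G(S_1)$, and $[G:N_G(S_1)]=t$ may well be divisible by $p$; taking the diagonal $\psi=\theta^{\otimes t}$ is only $G$-invariant if $\theta$ is fixed by the relevant outer automorphisms, which you cannot arrange without information about $\mathrm{Out}(S)$ acting on $\mathrm{Irr}(S)$; and even when $\psi$ is $G$-invariant, extendibility to $G$ (rather than merely to a subgroup of $p$-power index) is not free. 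The analogous issue recurs in the $p$-complement half, compounded by the extra requirement that the character constructed be nontrivial on $N'$ and not merely on $P$.

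In short, your claim that Thompson's theorem is the \emph{sole} external input is too optimistic. The proof in \cite{Isaacs3} does follow a broadly similar inductive architecture, but the lifting step you isolate is handled there with the specific machinery developed earlier in that paper, not by an ad~hoc Clifford argument. As written, your proposal is a correct strategic outline with the central lemma left unproved.
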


We need the next lemma to prove the remaining parts of our first
theorem.
\begin{lemma}\label{sixthirteen}
If $(G, N, M)$ is a Camina triple, then $M$ is solvable and  has a
normal $p$-complement for every prime $p$ that divides $\mid
G:N\mid$.
\end{lemma}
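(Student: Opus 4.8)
The plan is to apply Berkovich's theorem (Theorem 2.10 in the excerpt, = Theorem D of \cite{Isaacs3}) with a suitable choice of normal subgroup. The natural choice is to take the normal subgroup in Berkovich's theorem to be $M$ itself, so that we must understand $\mathrm{cd}(G \mid M')$, i.e. the degrees of those irreducible characters $\chi$ of $G$ with $M' \nleq \ker(\chi)$. Let $p$ be a prime dividing $|G:N|$; I want to show every degree in $\mathrm{cd}(G \mid M')$ is divisible by $p$, after which Berkovich gives that $M$ is solvable and has a normal $p$-complement, which is exactly the conclusion.

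To establish divisibility by $p$, first observe that $M' \leq M$, so $\mathrm{Irr}(G \mid M') \subseteq \mathrm{Irr}(G \mid M)$; hence it suffices to show that every $\chi \in \mathrm{Irr}(G \mid M)$ has degree divisible by $p$. By Theorem \ref{sixtwo}, part (3) (or part (4)), such a $\chi$ vanishes on $G \setminus N$. Now pick an element $g \in G$ whose image $gN$ in $G/N$ has order divisible by $p$ — this is possible precisely because $p \mid |G:N|$ — and in fact choose $g$ so that $gN$ is a nontrivial $p$-element of $G/N$; then every power $g^{i}$ with $p \nmid i$ also lies in $G \setminus N$. Consider the restriction of $\chi$ to the cyclic group $\langle g \rangle$. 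The idea is a standard vanishing-sum argument: since $\chi$ vanishes on all generators of the coset structure outside $N$, one analyzes $\sum$ of $\chi$ over an appropriate cyclic or coset sum, concluding via an algebraic-integer / column-orthogonality argument that $\chi(1)$ is divisible by $p$.

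Concretely, I would argue as follows. Let $P/N$ be a subgroup of order $p$ in a Sylow $p$-subgroup of $G/N$, and let $x \in G$ with $xN$ generating $P/N$. Then $x, x^{2}, \dots, x^{p-1}$ all lie in $G \setminus N$, so $\chi(x^{i}) = 0$ for $1 \le i \le p-1$, while $x^{p} \in N$. Summing the character values of $\chi$ restricted to $\langle x \rangle$ over the coset $xN \cap \langle x\rangle$, or more cleanly using that $\frac{1}{|{\langle x\rangle}|}\sum_{i} \chi(x^{i}) \overline{\lambda(x^{i})}$ is a nonnegative integer for each linear character $\lambda$ of $\langle x \rangle$ together with the vanishing of $\chi$ on the nontrivial cosets, one extracts a congruence showing $p \mid \chi(1)$. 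The cleanest route may instead be the classical one: restrict $\chi$ to $\langle x \rangle N$ and use that $\chi_{N}$ is a sum of $G$-conjugates of some $\theta \in \mathrm{Irr}(N)$ with $M' \nleq \ker\theta$; the induced character $\theta^{\langle x\rangle N}$ has degree $p \cdot \theta(1)$, and comparing multiplicities forces $p \mid \chi(1)$.

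The main obstacle is the divisibility step — making the vanishing-on-$G\setminus N$ condition yield $p \mid \chi(1)$ rigorously. The subtlety is that $G/N$ need not be a $p$-group, so one cannot directly say $\chi$ vanishes on a whole Sylow coset; one must work with a single order-$p$ subgroup $P/N$ and argue that the $N$-part of $\chi$ cannot be invariant under $x$ in a way compatible with $\chi(1)$ coprime to $p$. Once that congruence is in hand, everything else is a direct invocation of Berkovich's theorem applied to $(G,M)$, and the two conclusions (solvability of $M$, existence of a normal $p$-complement for each $p \mid |G:N|$) drop out immediately.
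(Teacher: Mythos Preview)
Your overall strategy is exactly the paper's: show that every $\chi\in\mathrm{Irr}(G\mid M)$ (hence every $\chi\in\mathrm{Irr}(G\mid M')$) has degree divisible by each prime $p\mid|G:N|$, then invoke Berkovich's theorem with the normal subgroup $M$. The paper does not argue the divisibility step ad hoc; it simply cites the discussion on p.~200 of \cite{Isaacs1}.

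Your sketched routes for that step are in the right spirit but not quite closed. In particular, in the $\langle x\rangle N$ argument the induced character $\theta^{\langle x\rangle N}$ need not be irreducible (when $\theta$ is $x$-invariant it splits into $p$ extensions of degree $\theta(1)$), so ``comparing multiplicities'' does not yield $p\mid\chi(1)$ without the extra observation that $\chi_{\langle x\rangle N}\otimes\lambda=\chi_{\langle x\rangle N}$ for every linear $\lambda$ of $\langle x\rangle N/N$. The clean, reference-free argument is pure Clifford theory: since $\chi$ vanishes off $N$,
\[
[\chi_N,\chi_N]=\frac{1}{|N|}\sum_{n\in N}|\chi(n)|^2=\frac{|G|}{|N|}\cdot\frac{1}{|G|}\sum_{g\in G}|\chi(g)|^2=|G:N|,
\]
while Clifford's theorem gives $\chi_N=e\sum_{i=1}^{t}\theta_i$ with the $\theta_i$ distinct $G$-conjugates, so $e^{2}t=|G:N|$. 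Then $p\mid e^{2}t$ forces $p\mid e$ or $p\mid t$, and in either case $p\mid et\,\theta_1(1)=\chi(1)$. With this, Berkovich's theorem applied to $M$ immediately gives both solvability and the normal $p$-complement.
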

\begin{proof}
Let $\chi\in \rm{Irr}(G\mid M).$ We know by Lemma \ref{sixtwo} that
$\chi(g)=0$ for all $g\in G\setminus N.$ By the discussion in
\cite{Isaacs1} page 200. we deduce that for every prime $p$ divisor
of $| G:N|$, $p$ divides $ \chi(1)$ for all $\chi \in \rm{Irr}(G\mid
M')$. So by Berkovich's theorem, $M$ is solvable and $M$ has a
normal $p$-complement.
\end{proof}
Now, we show that if $(G,N,M)$ is a Camina triple, then $M$ has a
normal $\pi$-complement, where $\pi$ is the set of primes that
divide $|G:N|.$ This proves the remaining parts of Theorem 1.
\begin{lemma}\label{sixfourteen}
If $(G, N, M)$ is a Camina triple, and $ \pi = \{ p$ prime $\mid p$
divides $ | G:N | \}$, then $M$ has a normal $\pi$-complement $Q$
such that $ M/Q$ is nilpotent.
\end{lemma}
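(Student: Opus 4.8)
The plan is to obtain $Q$ as the intersection of the normal $p$-complements furnished by Lemma~\ref{sixthirteen}, one for each prime in $\pi$, and then to recognise $M/Q$ as nilpotent through the classical criterion that a finite group possessing a normal $p$-complement for every prime dividing its order is nilpotent.

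First I would apply Lemma~\ref{sixthirteen} to get, for each $p\in\pi$, a normal $p$-complement $Q_p$ of $M$ (taking $Q_p=M$ when $p\nmid|M|$); a short order argument shows such a $Q_p$ is unique, hence characteristic in $M$, and therefore normal in $G$ since $M\lhd G$. Set $Q=\bigcap_{p\in\pi}Q_p$, a normal subgroup of $G$ contained in $M$. The heart of the first half is then pure arithmetic of orders: $|M:Q|$ divides $\prod_{p\in\pi}|M:Q_p|$, a product of $\pi$-numbers, so $|M:Q|$ is a $\pi$-number; meanwhile $|Q|$ divides $|Q_p|$ for every $p\in\pi$ and $|Q_p|$ is prime to $p$, so $|Q|$ is a $\pi'$-number. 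Hence $Q$ is a normal $\pi$-complement of $M$.

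For the nilpotency of $M/Q$, I would argue that since $|M/Q|$ is a $\pi$-number, every prime divisor $p$ of $|M/Q|$ lies in $\pi$; for such $p$ the image $Q_p/Q$ is a normal $p$-complement of $M/Q$, because its index $|M:Q_p|$ is a power of $p$ and its order divides $|Q_p|$, hence is prime to $p$. Thus $M/Q$ has a normal $p$-complement for every prime divisor of its order and is therefore nilpotent. (Alternatively, the diagonal map embeds $M/Q$ into the direct product $\prod_{p\in\pi}M/Q_p$ of $p$-groups, which is nilpotent, and subgroups of nilpotent groups are nilpotent.) I do not expect any genuine obstacle here: once Lemma~\ref{sixthirteen} is in hand, everything reduces to the uniqueness and normality of the $Q_p$ and the elementary bookkeeping of orders and indices.
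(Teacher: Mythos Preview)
Your proposal is correct and follows essentially the same route as the paper: define $Q$ as the intersection of the normal $p$-complements $Q_p$ from Lemma~\ref{sixthirteen}, verify by order arithmetic that $Q$ is a normal $\pi$-complement, and then deduce nilpotency of $M/Q$ from the fact that it has a normal $p$-complement for every prime dividing its order. The only cosmetic difference is that the paper proves this last nilpotency criterion from scratch by induction on the group order, whereas you invoke it as classical (and your alternative embedding of $M/Q$ into $\prod_{p\in\pi} M/Q_p$ is in fact a cleaner way to see it).
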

\begin{proof}
Since $(G, N, M)$ is a Camina triple, by Lemma \ref{sixthirteen}, we
know that $ M$ has a normal $p$-complement for every $p \in \pi$.
Now, let $ Q$ be the intersection of these normal $p$-complements.
Hence, $Q$ is a normal $\pi$-complement of $M$. Now, to prove that $
M/Q$ is nilpotent, it will be enough to show that any finite group
having a normal $p$-complement for every prime $p$ is nilpotent. Let
$G$ be a finite group that has a normal $p$-complement for every
prime $p$. We work by induction on $|G|$. If $|G| = 1,$ then the
result is trivial, so we may assume $G> 1.$ Let $p$ be a prime, and
we show that G has a normal Sylow $p$-subgroup.  If $G$ is a
$p$-group, then this is trivial. Thus, we may assume that $ |G|$ is
divisible by some prime $q$ which is not $p$. By hypothesis, G has a
normal $q$-complement N. Observe that $N < G,$ and so the induction
hypothesis implies that $N$ is nilpotent. Hence, $N$ has a normal
Sylow $p$-subgroup $P$ and thus, $P$ is characteristic in $N$. But
$G/N$ is a $q$-group, so $P$ is a Sylow $p$-subgroup of $G$.  It
follows that $G$ has a normal Sylow $p$-subgroup as required.
\end{proof}

The following lemma, is very useful.

\begin{lemma} \label{nine}
Let $(G,N,M)$ be a Camina triple. If $x\in G\setminus N$ and $
o(x)=m$, and $y \in C_{M}(x)$ then the order of $y $ divides $m$.
\end{lemma}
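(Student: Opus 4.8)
The plan is to argue directly with the element $x$ and its commuting partner $y$, without invoking any character theory: the whole point is that translating $x$ by an element of $M$ keeps us inside $G\setminus N$, so the Camina triple hypothesis applies verbatim to the translate and pins down its order.

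Concretely, I would proceed in the following steps. First, observe that $xy\notin N$: since $y\in M\le N$, if $xy$ were in $N$ then $x=(xy)y^{-1}$ would lie in $N$, contradicting $x\in G\setminus N$. Second, because $(G,N,M)$ is a Camina triple and $xy\in xM$, the definition of a Camina triple gives that $x$ is conjugate to $xy$ in $G$; conjugate elements have equal order, so $o(xy)=o(x)=m$. Third, since $y\in C_{M}(x)$, the elements $x$ and $y$ commute, and hence $(xy)^{m}=x^{m}y^{m}=y^{m}$, using $o(x)=m$. Finally, $o(xy)=m$ says $(xy)^{m}=1$, so $y^{m}=1$ and therefore the order of $y$ divides $m$, as claimed.

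There is no genuine obstacle here; the one point that needs a moment's attention is the very first step, namely checking that the translate $xy$ still lies in $G\setminus N$ so that the Camina triple property can be applied to it. If one prefers to route the argument through Theorem~\ref{sixtwo}, part (5) lets us write $y=[x,z]$ for some $z\in G$, i.e.\ $z^{-1}xz=xy$, which again exhibits $xy$ as a conjugate of $x$ and hence of order $m$; but the direct conjugacy argument above is the shortest path.
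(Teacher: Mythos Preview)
Your proof is correct and is essentially the paper's own argument: use that $xy\in xM$ so $xy$ is conjugate to $x$ and hence has order $m$, then expand $(xy)^m=x^m y^m$ via commutativity to get $y^m=1$. The only remark is that your first step is unnecessary---the Camina triple hypothesis is applied to $x\in G\setminus N$ (giving $x$ conjugate to every element of $xM$), not to $xy$, so you need not verify $xy\notin N$.
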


\begin{proof}
Since $xy \in xM$, we know that $xy$ is a conjugate to $x$. Thus
$xy$ has order $m$. Hence $x^{m}y^{m} =1$, and so, $y^{m}=1$ as
desired.
\end{proof}

We show in the next lemma that if $G/N$ is not a $p$-group for any
prime $p,$ then $M\cap Z(G)=\{1\}.$

\begin{lemma}\label{nineee}
Let $(G, N, M)$ be a Camina Triple, and $G/N$ is not a $p$-group for
any prime $p$, then $ M\cap Z(G) =\{1\}$.
\end{lemma}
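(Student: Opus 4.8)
The plan is to argue by contradiction, using Lemma~\ref{nine} as the main tool. Suppose $M\cap \mathbf{Z}(G)\neq\{1\}$ and pick some $z\in M\cap\mathbf{Z}(G)$ with $z\neq 1$. Replacing $z$ by a suitable power of itself, I may assume $o(z)=p$ for some prime $p$. The goal is to exhibit an element $x\in G\setminus N$ whose order is not divisible by $p$; once we have this, Lemma~\ref{nine} (applied to $x$, noting $z\in\mathbf{Z}(G)\cap M\leq \mathbf{C}_{M}(x)$) forces $o(z)\mid o(x)$, i.e.\ $p\mid o(x)$, which is the desired contradiction.

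To build such an $x$, I would use the hypothesis that $G/N$ is not a $p$-group: this means $|G:N|$ is divisible by some prime $q\neq p$. Let $Q$ be a Sylow $q$-subgroup of $G$. Then $QN/N\cong Q/(Q\cap N)$ is a $q$-group, and its index $[G/N:QN/N]=[G:QN]$ divides $[G:Q]$, which is coprime to $q$; hence $QN/N$ is a Sylow $q$-subgroup of $G/N$. Since $q\mid|G:N|$, it is nontrivial, so $Q\not\leq N$. Choosing any $x\in Q\setminus N$, we have $x\in G\setminus N$ and $o(x)$ is a power of $q$, so $p\nmid o(x)$, as required. Combining this with the previous paragraph completes the proof that $M\cap\mathbf{Z}(G)=\{1\}$.

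The argument is short, and I do not anticipate a serious obstacle. The one place that requires a little care is the choice of the witnessing element $x$: it must be selected inside a Sylow $q$-subgroup of $G$ itself (so that its order is automatically coprime to $p$), rather than obtained by lifting an arbitrary element of $G/N$, whose preimages need not have $p'$-order. With $x$ chosen this way, Lemma~\ref{nine} applies cleanly and yields the contradiction.
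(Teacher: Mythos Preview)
Your proof is correct and relies on the same key tool as the paper, namely Lemma~\ref{nine}. The execution differs slightly: the paper argues directly rather than by contradiction, choosing elements $x,y\in G\setminus N$ whose images in $G/N$ have orders a $p$-power and a $q$-power for distinct primes $p,q$, then replacing $x,y$ by suitable powers so that $x$ has $p$-power order and $y$ has $q$-power order in $G$; Lemma~\ref{nine} then forces $\mathbf{C}_{M}(x)$ to be a $p$-group and $\mathbf{C}_{M}(y)$ to be a $q$-group, whence $M\cap\mathbf{Z}(G)\subseteq \mathbf{C}_{M}(x)\cap\mathbf{C}_{M}(y)=\{1\}$. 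Your version fixes a central $z$ of prime order $p$ first and then needs only \emph{one} witness $x\in G\setminus N$ of $p'$-order, which you produce neatly via a Sylow $q$-subgroup of $G$; this sidesteps the ``lift and take a power'' step the paper uses. Both arguments are equally short; yours is marginally more economical in that only one prime outside $p$ is needed, while the paper's direct version yields the slightly stronger intermediate statement that $\mathbf{C}_{M}(x)$ is a $p$-group for any $x\in G\setminus N$ of $p$-power order.
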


\begin{proof}
If $G/N$ is not a $p$-group, then we can find $x\in G\setminus N$
such that $o(Nx)=p^{a}$, and $y\in G\setminus N$ such that
$o(Ny)=q^{b}$, where $p, q$ are two distinct primes. Let $n$ be the
$p'$-part of the order of $x,$ and $m$ be the $q'$-part of the order
of $y.$ Notice that $x^{n} \not\in N$ and $y^{m} \not\in N.$ Also,
the order of $x^{n}$ is $p^{\alpha}$ and the order of $y^{m}$ is
$q^{\beta}.$ Hence, $C_{M}(x^{n})$ is a $p$-group and $C_{M}(y^{m})$
is a $q$- group. We know that $ M \cap Z(G) \subseteq
C_{M}(x^{n})\cap C_{M}(y^{m})=\{1\}$ as desired.
\end{proof}

In the next result, we prove that if $G$ is nilpotent, then $G/N$
and $M$ are $p$-groups for the same prime $p.$

\begin{lemma}\label{ninee}
Let $(G, N, M)$ be a Camina Triple, if $G$ is nilpotent then $M$ and
$G/N$ are $p$-groups for some prime $p$.
\end{lemma}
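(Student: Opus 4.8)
The plan is to first identify the prime $p$ from the hypothesis that $G$ is nilpotent, and then to show that $M$ has trivial $p'$-part.

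Since $M$ is a nontrivial normal subgroup of the nilpotent group $G$, it meets the center nontrivially: $M\cap\textbf{Z}(G)\neq\{1\}$. By the contrapositive of Lemma \ref{nineee}, $G/N$ must then be a $p$-group for some prime $p$. This settles one half of the statement, so it remains only to prove that $M$ is a $p$-group for this same $p$.

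To do this, I would first isolate the Hall $p'$-part of $M$. Because $G/N$ is a $p$-group, the set $\pi$ of primes dividing $|G:N|$ equals $\{p\}$, so Lemma \ref{sixfourteen} supplies a normal $p$-complement $Q\lhd M$ with $M/Q$ nilpotent, hence $M/Q$ a $p$-group. (One could equally take $Q=O_{p'}(M)$, using that $M$ is nilpotent as a subgroup of $G$.) Since $Q$ is characteristic in $M$ and $M\lhd G$, we get $Q\lhd G$, and the whole problem reduces to showing $Q=\{1\}$.

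Suppose $Q\neq\{1\}$. Then, $G$ being nilpotent, $Q\cap\textbf{Z}(G)\neq\{1\}$; choose $1\neq z$ in this intersection, and note that $o(z)$ is a nontrivial $p'$-number since $z\in Q$. As $M\le N<G$ we may pick some $x\in G\setminus N$, and writing $x=x_{p}x_{p'}$ as the product of its commuting $p$- and $p'$-parts, the image of $x_{p'}$ in the $p$-group $G/N$ is trivial, so $x_{p'}\in N$; since the image of $x$ in $G/N$ is nontrivial, it follows that $x_{p}\in G\setminus N$ and $o(x_{p})$ is a power of $p$. Now $z\in\textbf{Z}(G)\cap M\subseteq C_{M}(x_{p})$, so Lemma \ref{nine} forces $o(z)$ to divide $o(x_{p})$, i.e. $o(z)$ is a power of $p$, contradicting that $o(z)$ is a nontrivial $p'$-number. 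Hence $Q=\{1\}$ and $M\cong M/Q$ is a $p$-group, as claimed.

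The only point requiring any care is the reduction from an arbitrary element of $G\setminus N$ to a $p$-element of $G\setminus N$; this is where the hypotheses that $G/N$ is a $p$-group and that $G$ is nilpotent (so every element splits into commuting prime-power parts) are both used. Apart from that, the argument is a straightforward chaining together of Lemmas \ref{nineee}, \ref{sixfourteen} and \ref{nine}, so I do not expect a genuine obstacle.
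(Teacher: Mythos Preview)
Your proof is correct and follows essentially the same strategy as the paper's: use Lemma \ref{nineee} (via $M\cap\textbf{Z}(G)\neq 1$) to conclude $G/N$ is a $p$-group, produce a $p$-element of $G\setminus N$, and then invoke Lemma \ref{nine} to rule out any nontrivial $p'$-element of $M$ centralizing it. The only organizational difference is that the paper does not pass through $Q$ or Lemma \ref{sixfourteen} at all: it simply picks, for any prime $q\neq p$ dividing $|M|$, a $q$-element $y\in M$ and observes that in a nilpotent group elements of coprime order commute, so $y\in C_{M}(x^{n})$ directly---your route through $Q\cap\textbf{Z}(G)$ reaches the same contradiction by a slightly longer path.
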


\begin{proof}
Since $G$ is nilpotent, then $Z(G) $  cannot intersect with $M$
trivially. Hence, by Lemma \ref{nineee} $G/N$ is $p$-group for some
prime $p$. Now let $x \in G\setminus N$ where $o(Nx)=p^{a},$ Let $n$
be the $p'$-part of the order of $x.$ Notice that $x^{n} \not\in N$
and the order of $x^{n}$ is $p^{\alpha}.$ Hence, $C_{M}(x^{n})$ is a
$p$-group. Thus, $ M \cap Z(G)$ is a $p$-group. Now, suppose that
there exists a prime $q \neq p$ such that $q$ divides $|M|.$ Hence,
there exists $y \in M$ where the order of $y$ is $q^{m}.$ Since $G$
is nilpotent and $(o(x^{n}),o(y))=1.$ Then $y\in C_{G}(x^{n}).$ But,
by Lemma \ref{nine}, we know that $o(y)$ divides the order of
$x^{n}.$ Which leads to a contradiction, and $M$ is a $p$-group.
\end{proof}

We are now ready to prove theorem 2.

\begin{proof}[proof of Theorem 2]
If $G$ is nilpotent, then by Lemma \ref{ninee} we have $(1)$ and
$(2)$ hold. So, we may assume that $G$ is not nilpotent. If $G/N$ is
a $p$-group, then $(1)$ holds. Assume that $G/N$ is not a $p$-group,
and let $\pi=\{p:$ prime such that $p $ divides $ |G/N|\},$ by Lemma
\ref{sixfourteen}, $M$ has a normal $\pi$-complement $Q$ such that $
M/Q$ is nilpotent. If $Q\neq \{1\}$ and proper in $M$, then (2)
holds. Also, if $Q=\{1\},$ then $M$ is nilpotent and $(2)$ holds.
Now, if $M=Q,$ then $(|M|, |G/N|)=1.$ In this case, we know that
maybe $M$ does not have a nilpotent quotient. If $M$ is abelian,
then (4) holds. So, we may assume that $M$ is not abelian. By
Theorem 1, we know that if $(G,N,M)$ is a Camina triple, then $M$ is
solvable. Hence, if $M$ is not abelian, then we can consider $K$ to
a maximal normal subgroup of $M$ such that $M/K$ is not abelian.
Note that, $(M/K)'$ is the unique minimal normal subgroup of $M/K.$
Therefore, the group $M/K$ satisfies the hypothesis of Theorem 12.3
of \cite{Isaacs1}. So, either $M/K$ is a $p$-group and hence $M/K$
is nilpotent and (3) holds, or $M/K$ is a Frobenius group with an
abelian Frobenius complement, and $(M/K)'$ is the Frobenius kernel
and is an elementary abelian $p$-group and (4) holds as desired.

\end{proof}

\section{Camina pairs}

We are now prove some results about Camina pairs using Camina
triples results, and given the fact that they are special cases of
Camina triples. In \cite{camina1}, Camina defined a different
hypothesis that is equivalent to Camina pairs. Let $G$ be a finite
group with a proper normal subgroup $N \neq 1$ and a set of
irreducible non-trivial characters of $G$, $A=\{
\chi_{1}\cdots\chi_{n}\},$ where $n$ is a natural number, such that
\begin{enumerate}
\item $\chi_{i}$ vanishes on $ G\setminus N$ and
\item there exist natural numbers $\alpha_{1}\cdots\alpha_{n} > 0$
such that $\sum_{i=1}^{n}\alpha_{i}\chi_{i}$ is constant on
$N\setminus \{1\}.$
\end{enumerate}
We are able to identify the characters in Camina hypothesis in
\cite{camina1}. First, let $N$ be a normal subgroup of $G$ and
$\theta \in \rm{Irr}(N).$ The inertia group of $\theta$ in $G$
denoted by $T$ and defined by $\{g\in G\mid \theta^{g}=\theta\}.$
\begin{theorem} \label{sixseventeen} Let
$(G,N) $ be a Camina Pair then, $A = \rm{Irr}(G|N)$.
\end{theorem}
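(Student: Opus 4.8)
The plan is to prove the set equality $A = \mathrm{Irr}(G\mid N)$ by two inclusions. The inclusion $A \subseteq \mathrm{Irr}(G\mid N)$ is elementary, while the reverse inclusion $\mathrm{Irr}(G\mid N) \subseteq A$ is the real content and rests on a regular‑character identity together with the hypothesis $\alpha_i > 0$.

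For $A \subseteq \mathrm{Irr}(G\mid N)$: suppose some $\chi_i \in A$ had $N \leq \ker(\chi_i)$. Then $\chi_i$ is inflated from $\bar\chi_i \in \mathrm{Irr}(G/N)$, and condition (1) forces $\bar\chi_i$ to vanish on $(G/N)\setminus\{1\}$, i.e.\ $\bar\chi_i$ is supported on the identity of $G/N$. But a class function of a finite group $H$ supported on $\{1\}$ is a scalar multiple of the regular character $\rho_H = \sum_{\psi\in\mathrm{Irr}(H)}\psi(1)\psi$, which involves every irreducible constituent with nonzero coefficient; as $\bar\chi_i$ is a single irreducible character this forces $|\mathrm{Irr}(G/N)| = 1$, hence $G/N = 1$, contradicting that $N$ is proper. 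So each $\chi_i \in \mathrm{Irr}(G\mid N)$, and $\psi := \sum_{i=1}^n \alpha_i\chi_i$ lies in the span of $\mathrm{Irr}(G\mid N)$.

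For the reverse inclusion I would use the Chillag--MacDonald fact quoted in the introduction: every $\chi \in \mathrm{Irr}(G\mid N)$ vanishes on $G\setminus N$, so condition (1) holds automatically for all of $\mathrm{Irr}(G\mid N)$. Introduce $\Psi := \rho_G - \rho_{G/N}$, where $\rho_{G/N}$ denotes the inflation to $G$ of the regular character of $G/N$; then $\Psi = \sum_{\chi\in\mathrm{Irr}(G\mid N)}\chi(1)\chi$, and a direct evaluation gives $\Psi \equiv 0$ on $G\setminus N$, $\Psi \equiv -|G:N|$ on $N\setminus\{1\}$, and $\Psi(1) = |G| - |G:N|$. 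Now let $c$ be the constant value of $\psi$ on $N\setminus\{1\}$ and set $\eta := \psi + \tfrac{c}{|G:N|}\Psi$. Both $\psi$ and $\Psi$ vanish off $N$, and $\eta$ vanishes on $N\setminus\{1\}$ by the choice of coefficient, so $\eta$ is supported on $\{1\}$ and hence is a scalar multiple of $\rho_G$. Since $\eta$ lies in the span of $\mathrm{Irr}(G\mid N)$ whereas $\rho_G = \Psi + \rho_{G/N}$ has a nonzero $\mathrm{Irr}(G/N)$-component, that scalar must be $0$; hence $\eta = 0$ and $\psi = -\tfrac{c}{|G:N|}\sum_{\chi\in\mathrm{Irr}(G\mid N)}\chi(1)\chi$.

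Finally, comparing coefficients (the irreducible characters being linearly independent) gives $\alpha_i = -\tfrac{c}{|G:N|}\chi_i(1)$ for each $\chi_i \in A$ and $0 = -\tfrac{c}{|G:N|}\lambda(1)$ for each $\lambda \in \mathrm{Irr}(G\mid N)\setminus A$. Were the latter set nonempty we would get $c = 0$ (since $\lambda(1)\geq 1$), hence all $\alpha_i = 0$, contradicting $\alpha_i > 0$ and $A \neq\emptyset$. Therefore $\mathrm{Irr}(G\mid N)\subseteq A$, and with the first inclusion $A = \mathrm{Irr}(G\mid N)$. I expect the main obstacle to be this reverse inclusion: recognizing that one should subtract off the correct multiple of $\Psi = \rho_G - \rho_{G/N}$ to isolate a delta function at the identity, and then convert the strict positivity of the $\alpha_i$ into the statement that no irreducible character in $\mathrm{Irr}(G\mid N)$ can be omitted from $A$. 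The forward inclusion and the evaluation of $\Psi$ are routine.
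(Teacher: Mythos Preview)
Your proof is correct, and in fact establishes slightly more than the theorem as stated: you never invoke the Camina-pair hypothesis directly, so what you have really shown is that \emph{any} set $A$ satisfying Camina's two conditions must coincide with $\mathrm{Irr}(G\mid N)$. (Indeed, the Chillag--MacDonald vanishing fact you mention is not actually needed for your argument: $\psi$ vanishes on $G\setminus N$ by property~(1), and $\Psi=\rho_G-\rho_{G/N}$ vanishes there by direct evaluation.)

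The paper's route to the reverse inclusion is genuinely different. Instead of your regular-character subtraction, it proceeds one $\theta\in\mathrm{Irr}(N)\setminus\{1_N\}$ at a time via Clifford theory: for a $\chi_i\in A$ lying over $\theta$ one has $(\chi_i)_N=e(\theta_1+\cdots+\theta_t)$ and $\chi_i$ vanishes off $N$, so $\chi_i$ is a scalar multiple of the induced character $\theta^G$; hence $\theta^G$ has a \emph{unique} irreducible constituent, $\mathrm{Irr}(G\mid\theta)=\{\chi_i\}\subseteq A$, and taking the union over all nonprincipal $\theta$ gives $\mathrm{Irr}(G\mid N)\subseteq A$. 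The paper's method therefore yields the additional structural byproduct that $|\mathrm{Irr}(G\mid\theta)|=1$ for every nonprincipal $\theta\in\mathrm{Irr}(N)$. Your argument, on the other hand, is more self-contained (no induced characters or Clifford theory), treats all of $\mathrm{Irr}(G\mid N)$ simultaneously, and makes completely transparent the role of the strict positivity $\alpha_i>0$ as the obstruction to omitting any character from $A$.
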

\begin{proof}
First, we show that $ A\subseteq \rm{Irr}(G|N).$ To see this,
suppose $\chi_{j} \in A \setminus  \rm{Irr}(G | N).$ This implies
that $\chi_{j} \in \rm{Irr}(G/N)$. On the other hand, since
$\chi_{j} \in A$, we have that $\chi_{j} (x) = 0$ for all $x \in G
\setminus N$.  This implies $\chi_{j} (xN) = 0$ for all $xN \in G/N
\setminus  \{ N \}$, and hence, $\chi_{j}$ is a multiple of the
regular character of $G/N$. Since $N < G$, we know that the regular
character of $G/N$ is not irreducible, and so, we have a
contradiction since it is not possible for an irreducible character
to be a multiple of a reducible character. 
Thus no such $\chi_{j}$ exist in $A$. Therefore $
A\subseteq \rm{Irr}(G|N)$. 
On the other hand, for every $ 1_{N} \neq \theta \in\rm{Irr}(N) $
and by Theorem 6.11 in \cite{Isaacs1}, there exist $ \chi_{i} \in A
$ such that $ \chi_{i} \in\rm{Irr}(G|\theta)$. Notice that
$\theta^{G}(g)=0$ if $g \not\in N$, and if $ g \in N$, then  $
\theta^{G}(g)= \frac{1}{|N|}\sum_{x\in G} \theta^{x}(g)$, hence
$\theta^{G} (g) = \frac{1}{|N|} |T|(\theta_1 (g) + \dots + \theta_n
(g))= \theta^{G} (g) = |T:N| (\theta_1 (g) + \dots + \theta_n (g))$
where $\theta_{i},  i=1\dots n,$ are the distinct conjugates of
$\theta$ in $G$. Note that $ \chi_{i}(g) =0 $ if $ g\not\in N$, and
if $ g \in N$, $ \chi_{i}(g) = a (\theta_{1}(g)+\dots+
\theta_{n}(g))$ where, $a$ is a non negative integer. Hence
$\chi_{i}= c \theta^{G}$. Thus, $ \chi_{i}$ is the unique
irreducible constituent of $\theta^{G}$. Thus $|\rm{Irr}(G|\theta)|
=1$, and $ \rm{Irr}(G|\theta) \subseteq A$ and since $
\rm{Irr}(G|N)=\cup_{1\neq \theta \in \rm{Irr}(N)}
\rm{Irr}(G\mid_{\theta})$. Then $ |A|= |\rm{Irr}(G|N)|$. And since $
A\leq\rm{Irr}(G|N)$ then, $A = \rm{Irr}(G|N)$ as desired.
\end{proof}
Our last result in this chapter, states some new conditions for a
pair $(G,N)$ to be a Camina pair.
\begin{theorem} \label{sixeighteen}
Let $G$ be a finite group and $ N\vartriangleleft G$, then the
following are equivalent:
\begin{enumerate}
\item $(G,N)$ is a \emph{CP}.
\item $V(G|N)$ = $N$.
\item There is no $x$ in $N$ such that $\chi (x) =0 $ for all $
\chi$'s in $ \rm{Irr}(G|N)$, and if $ x \in G\setminus N$, then
$\chi(x) =0 $ for all $ \chi$'s in $ \rm{Irr}(G|N)$
\end{enumerate}

\end{theorem}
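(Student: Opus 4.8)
The plan is to prove the equivalences in the cycle $(1)\Rightarrow(2)\Rightarrow(3)\Rightarrow(1)$, drawing heavily on the characterization of Camina triples in Theorem \ref{sixtwo} applied to the special case $M=N$ (which is legitimate by Lemma \ref{sixthree}). Recall that $V(G\mid N)$ is by definition the set of $g\in G$ on which some $\chi\in\rm{Irr}(G\mid N)$ fails to vanish, together with the identity — i.e. the complement of the common zero set of $\rm{Irr}(G\mid N)$ in $G$; equivalently $V(G\mid N)$ is generated by those elements.

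First I would show $(1)\Rightarrow(2)$. Assume $(G,N)$ is a Camina pair. By Theorem \ref{sixtwo}, condition (3) there gives $\chi(g)=0$ for every $g\in G\setminus N$ and every $\chi\in\rm{Irr}(G\mid N)$, so $V(G\mid N)\subseteq N$; this is condition (4) of Theorem \ref{sixtwo} in the case $M=N$. For the reverse containment $N\subseteq V(G\mid N)$, I would invoke Corollary 1 (the corollary to Theorem 1 stated in the introduction): if $(G,N)$ is a Camina pair and $x\in N$, then there exists $\chi\in\rm{Irr}(G\mid N)$ with $\chi(x)\neq 0$. Hence every element of $N$ lies in the nonvanishing set, so $N\subseteq V(G\mid N)$, and therefore $V(G\mid N)=N$.

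Next, $(2)\Rightarrow(3)$ is essentially an unwinding of definitions. If $V(G\mid N)=N$, then since $V(G\mid N)$ is exactly the set of points at which some character in $\rm{Irr}(G\mid N)$ is nonzero, the equality with $N$ says precisely two things: every $x\in N$ is a nonvanishing point (so there is no $x\in N$ killed by all of $\rm{Irr}(G\mid N)$), and every $x\in G\setminus N$ is a common zero (so $\chi(x)=0$ for all $\chi\in\rm{Irr}(G\mid N)$). That is exactly statement (3). The one point to be careful about is the role of the identity element, which is automatically a nonvanishing point, so it causes no trouble.

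Finally, $(3)\Rightarrow(1)$. The second half of (3) says $\chi(g)=0$ for every $g\in G\setminus N$ and every $\chi\in\rm{Irr}(G\mid N)$, which is precisely condition (3) of Theorem \ref{sixtwo} with $M=N$; by that theorem (and Lemma \ref{sixthree}) the triple $(G,N,N)$ is a Camina triple, hence $(G,N)$ is a Camina pair. I expect the main subtlety to be bookkeeping around the definition of $V(G\mid N)$ and making sure the "there is no $x$ in $N$ with all characters vanishing" clause is handled symmetrically with the forward direction — the real mathematical content (the $N\subseteq V(G\mid N)$ direction) is already packaged in Corollary 1, so the theorem is mostly a matter of assembling previously established equivalences rather than proving something new from scratch.
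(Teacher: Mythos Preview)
Your approach uses exactly the same ingredients as the paper's proof—Theorem \ref{sixtwo} with $M=N$, Lemma \ref{sixthree}, and Corollary 1 (equivalently Theorem 1(3))—so in spirit the two arguments coincide. The only structural difference is the direction of the cycle: the paper runs $(3)\Rightarrow(2)\Rightarrow(1)$ and then handles $(1)\Rightarrow(3)$, whereas you run $(1)\Rightarrow(2)\Rightarrow(3)\Rightarrow(1)$.

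There is, however, a genuine slip in your $(2)\Rightarrow(3)$ step. You assert that ``$V(G\mid N)$ is exactly the set of points at which some character in $\mathrm{Irr}(G\mid N)$ is nonzero,'' but in this paper (following Lewis \cite{Lewis1}) $V(G\mid N)$ is the \emph{subgroup generated by} those nonvanishing points, not the bare set; the paper's own proof of $(3)\Rightarrow(4)$ in Theorem \ref{sixtwo} speaks of ``the generators of $V(G\mid M)$.'' Consequently, $V(G\mid N)=N$ does \emph{not} directly say that every $x\in N$ is a nonvanishing point—only that the nonvanishing points generate $N$ and that none lie outside $N$. The second clause of (3) follows, but the first clause does not come from ``unwinding definitions.'' The paper avoids this trap by going $(3)\Rightarrow(2)$, which really is trivial in that direction. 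Your argument is easily repaired: from $V(G\mid N)=N$ you have $V(G\mid N)\le N$, hence $(G,N)$ is a Camina pair by Theorem \ref{sixtwo} and Lemma \ref{sixthree}, and then Corollary 1 gives the first clause of (3). Since you already invoke Corollary 1 in $(1)\Rightarrow(2)$, no new tool is required—just reroute that step through (1).
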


\begin{proof}
Notice that 3) implies 2)is trivial. To prove 2) implies 1), assume
that $ V(G|N) =N $, by Theorem \ref{sixtwo}, $( G,N,N)$ is a Camina
triple. Thus, by Lemma \ref{sixthree}, $(G,N)$ is a Camina pair. To
prove 1) is equivalent to 3), by Lemma \ref{sixthree} and Theorem
\ref{sixtwo}, $(G,N)$ is a Camina pair if and only if $V(G\mid
N)\le N.$
\end{proof}

\end{document}